\documentclass[10pt,reqno]{amsart}
\usepackage{}
\usepackage{bbm}
\usepackage{amsfonts}
\usepackage{amssymb}
\usepackage{amsmath}
\usepackage{leftidx}
\usepackage{extarrows}
\usepackage[all]{xy}
\usepackage{cases}
\usepackage{txfonts}
\usepackage{amsfonts}
\usepackage{mathrsfs}
\usepackage{amssymb}
\usepackage{amsmath}
\usepackage{epic}
\usepackage[top=1in, bottom=1in, left=1.25in, right=1.25in]{geometry}
\setlength{\parindent}{0pt} \addtolength{\parskip}{7pt}
\setlength{\textwidth}{14.5cm}
\setlength{\textheight}{22cm}

\newtheorem{proposition}{Proposition}[section]
\newtheorem{lemma}[proposition]{Lemma}
\newtheorem{corollary}[proposition]{Corollary}

\theoremstyle{definition}
\newtheorem{definition}[proposition]{Definition}
\newtheorem{example}[proposition]{Example}

\theoremstyle{remark}
\newtheorem{remark}[proposition]{Remark}

\newcommand{\selabel}[1]{\label{se:#1}}
\newcommand{\seref}[1]{Section~\ref{se:#1}}
\newcommand{\lelabel}[1]{\label{le:#1}}
\newcommand{\leref}[1]{Lemma~\ref{le:#1}}
\newcommand{\prlabel}[1]{\label{pr:#1}}
\newcommand{\prref}[1]{Proposition~\ref{pr:#1}}
\newcommand{\colabel}[1]{\label{co:#1}}
\newcommand{\coref}[1]{Corollary~\ref{co:#1}}

\newcommand{\delabel}[1]{\label{de:#1}}

\newcommand{\eqlabel}[1]{\label{eq:#1}}
\newcommand{\equref}[1]{(\ref{eq:#1})}

\def\a{\alpha}

\def\b{\beta}

\def\D{\Delta}

\def\ep{\varepsilon}

\def\Ker{\mathrm{Ker}}

\def\op{\oplus}
\def\ot{\otimes}

\def\ra{\rightarrow}

\def\ss{\subseteq}

\def\Z{\mathbb{Z}}

\def\<{\leq}
\def\>{\geq}

\date{}
\begin{document}
\title{Dorroh extensions of algebras and coalgebras, II}
\thanks{}
\author{Lan You}
\address{School of Mathematical Science, Yangzhou University, Yangzhou 225002, China;
School of Mathematics and Physics, Yancheng Institute
of Technology, Yancheng 224051, China}
\email{youl@ycit.cn}
\author{Hui-Xiang Chen}
\address{School of Mathematical Science, Yangzhou University, Yangzhou 225002, China}
\email{hxchen@yzu.edu.cn}
\subjclass[2010]{16S70,16T15,16D25}
\keywords{Hopf algebra, Dorroh extension of Hopf algebra, trivial extension, ideal, subcoalgebra}

\begin{abstract}
In this paper, we study Dorroh extensions of bialgebras and Hopf algebras. Let $(H,I)$ be both a Dorroh pair of algebras and a Dorroh pair of coalgebras. We give necessary and sufficient conditions for $H\ltimes_dI$ to be a bialgebra and a Hopf algebra, respectively. We also describe all ideals of Dorroh extensions of algebras and subcoalgebras of Dorroh extensions of coalgebras and compute these ideals and subcoalgebras for some concrete examples.
\end{abstract}
\maketitle

\section*{Introduction}
Dorroh \cite{D} gave a general way to  embed a ring $I$ without identity into a ring with an identity $\Z \op I$,
which is now called a Dorroh extension of $I$. In classical ring theory, Dorroh extension has become
an important method of constructing new rings and of analyzing properties of rings.
Many rings, such as trivial extension of a ring with a bimodule, triangular matrix ring, $\mathbb{N}$-graded ring, can be regarded as Dorroh extensions of rings. There are many papers to study Dorroh extensions of algebras, see \cite{AJM, CS, CMRS, DF, F, M, Ro}.

In \cite{YC}, we investigated Dorroh extensions of algebras, Dorroh extensions of coalgebras
and the duality between them. In this paper, we continue the study. The paper is organized as follows.
In \seref{sec1}, we study Dorroh extensions of bialgebras and Hopf algebras.
Let $(H,I)$ be meanwhile a Dorroh pair of algebras and a Dorroh pair of coalgebras.
Then $H\ltimes_dI$ is both an algebra and a coalgebra. We give necessary and sufficient conditions
for $H\ltimes_d I$ to be a bialgebra and a Hopf algebra, respectively.
In \seref{sec2}, we investigate the ideals of Dorroh extensions of algebras.
For a given algebra Dorroh extension $A\ltimes_d I$, we describe the structures of ideals of  $A\ltimes_d I$.
In \seref{sec3},  we investigate the subcoalgebras of Dorroh extensions of coalgebras.
Given a coalgebra Dorroh extension $C\ltimes_d P$, we describe the structures of
subcoalgebra of $C\ltimes_d P$. In \seref{sec4}, as an application, we compute the ideals of unitization $k\ltimes_d I$
of an algebra $I$ without identity and the ideals of the trivial extension $A\ltimes M$ of an algebra $A$.
We also describe all subcoalgebras of  counitization $k\ltimes_d P$ of a coalgebra $P$ without counit
and the subcoalgebras of the trivial extension $C\ltimes M$ of a coalgebra $C$.

Throughout this paper, we work over a field $k$. All algebras are not necessarily unital, and coalgebras are not necessarily counital as in the previous paper \cite{YC}, unless otherwise stated.
However, Hopf algebras have always identities and counits.
For basic facts about coalgebras and Hopf algebras, the reader can refer to the books \cite{Abe, Mo, Sw}.
For simplicity, we use $1$ denote the identity maps on vector spaces.
When an algebra $A$ is unital, a left (resp., right) $A$-module $M$ is always assumed to be unital, i.e., $1_Am=m$
(resp., $m1_A=m$) for any $m\in M$.
For a coalgebra $C$ and $c\in C$, we write $\D(c)=\sum c_1\ot c_2$.
If $(M, \rho)$ is a right (resp., left) $C$-comodule, we write $\rho(m)=\sum m_{(0)}\ot m_{(1)}$
(resp.,  $\rho(m)=\sum m_{(-1)}\ot m_{(0)}$), $m\in M$. When $C$ is counital, a right (resp., left) $C$-comodule $M$ is always assumed to be counital, i.e., $\sum m_{(0)}\ep(m_{(1)})=m$
(resp.,  $\sum\ep(m_{(-1)})m_{(0)}=m$), $m\in M$.

\section{Dorroh extensions of Hopf algebras}\selabel{sec1}

In this section, a bialgebra does not necessarily have an identity nor a counit.

Recall from \cite{YC} that $(H,I)$ is called a {\it Dorroh pair of algebras} if  $H$ and $I$ are algebras and $I$ is an $H$-bimodule such that the module actions
are compatible with the multiplication of $I$, that is,
\begin{equation*}
	a(xy)=(ax)y, \quad  (xa)y=x(ay), \quad   (xy)a=x(ya),
\end{equation*}
for all $a\in H$ and $x, y\in I$. In this case, one can construct an associative algebra $H\ltimes_d I$ as follows:
$H\ltimes_d I=H\op I$ as a vector space,
the multiplication of $H\ltimes_d I$ is given by
\begin{equation*}
(a, x)(b, y)=(ab, ay+xb+xy).
\end{equation*}
$H\ltimes_d I$ is an algebra Dorroh extension of $H$ by $I$, where $H$ and $I$ are regarded as subspaces of $H\ltimes_d I$ in a canonical way.
Moreover, any algebra Dorroh extension of $H$ is isomorphic to some  $H\ltimes_d I$ as an algebra.

Dually, $(H, I)$ is called a {\it Dorroh pair of coalgebras} if $H$ and $I$ are coalgebras and $I$ is an $H$-bicomodule such that the comodule coactions are compatible with the comultiplication of $I$, that is,
\begin{eqnarray*}
\sum x_1\ot x_{2(0)}\ot x_{2(1)}&=&\sum x_{(0)1}\ot x_{(0)2}\ot x_{(1)}, \\
\sum x_{1(-1)}\ot x_{1(0)}\ot x_2&=&\sum x_{(-1)}\ot x_{(0)1}\ot x_{(0)2}, \\
\sum x_{1(0)}\ot x_{1(1)}\ot x_{2}&=&\sum x_1\ot x_{2(-1)}\ot x_{2(0)},
\end{eqnarray*}
for all $x\in I$. In this case, one can a coassociative coalgebra $H\ltimes_d I$ as folows: $H\ltimes_d I=H\op I$ as vector spaces, the comultiplication is given by
\begin{equation*}
\begin{split}
\D(h,x)=&\sum (h_1,0)\ot (h_2,0)+\sum (x_{(-1)},0)\ot (0,x_{(0)})\\
&+\sum (0,x_{(0)})\ot (x_{(1)},0)+\sum (0,x_1)\ot (0,x_2).
\end{split}
\end{equation*}
$H\ltimes_d I$ is a coalgebra Dorroh extension of $H$ by $I$.
Moreover, any coalgebra Dorroh extension of $H$ is isomorphic to some $H\ltimes_d I$ as a coalgebra.

For a Dorroh pair $(H, I)$ of algebras or coalgebras, let $\tau_H: H\ra H\ltimes_d I$, $h\mapsto(h,0)$ and $\tau_I: I\ra H\ltimes_d I$, $x\mapsto (0, x)$
be the canonical injections, respectively, and let $\pi_H: H\ltimes_d I\ra H$, $(h, x)\mapsto h$ and $\pi_I: H\ltimes_d I\ra I$, $(h, x)\mapsto x$ be the canonical projections, respectively. For any subspaces $U\ss H$ and $V\ss I$, denote $\tau_H(U)$ by $(U,0)$, $\tau_I(V)$ by $(0,V)$ and $\tau_A(U)+\tau_I(V)$ by $(U,V)$. In what follows, we will always use such symbols.

\begin{definition}\delabel{bialg}
Let $H$ be a bialgebra. A bialgebra $B$ is called a bialgebra Dorroh extension of $H$ if $H$
is a subbialgebra of $B$ and there exists a biideal $I$ of $B$ such that $B=H\op I$ as vector spaces.	
In this case, $B$ is also called a bialgebra Dorroh extension of $H$ by $I$.
\end{definition}

Let $(H,I)$ be both a Dorroh pair of algebras and a Dorroh pair of coalgebras.
Then $H\ltimes_d I$ is both an algebra and a
coalgebra as stated above. We will give some necessary and sufficient conditions for $H\ltimes_d I$
to be a bialgebra or a Hopf algebra.

\begin{proposition}\prlabel{prop1.2}
Let $(H,I)$ be both a Dorroh pair of algebras and a Dorroh pair of coalgebras.
Then $H\ltimes_d I$ is a bialgebra if and only if the following are satisfied:
\begin{enumerate}
\item[(a)] $H$ is a bialgebra, i.e., $\forall a, b\in H$,
      \begin{equation}\eqlabel{e1}
        \sum (ab)_1\ot (ab)_2=\sum a_1b_1\ot a_2b_2.
      \end{equation}
\item[(b)] The comultiplication of $I$, the left and right $H$-comodule structure maps of $I$ are all $H$-bimodule homomorphism, i.e.,
$\forall a, b\in H$ and $x, y\in I$,
    \begin{eqnarray}
\eqlabel{e2} \sum (ay)_1\ot (ay)_2&=&\sum a_1 y_1\ot a_2 y_2,\\
\eqlabel{e3} \sum (xb)_1\ot (xb)_2&=&\sum x_1b_1\ot x_2b_2,\\
\eqlabel{e4} \sum (ay)_{(-1)}\ot (ay)_{(0)}&=&\sum a_1 y_{(-1)}\ot a_2 y_{(0)}, \\
\eqlabel{e5} \sum (xb)_{(-1)}\ot (xb)_{(0)}&=&\sum x_{(-1)}b_1\ot x_{(0)}b_2, \\
\eqlabel{e6} \sum (ay)_{(0)}\ot (ay)_{(1)}&=&\sum a_1 y_{(0)}\ot a_2 y_{(1)}, \\
\eqlabel{e7} \sum (xb)_{(0)}\ot (xb)_{(1)}&=&\sum x_{(0)}b_1\ot x_{(1)}b_2.
  \end{eqnarray}
\item[(c)]  The left and right $H$-comodule structure maps of $I$ are both algebra homomorphisms,
i.e., $\forall x, y\in I$,
 \begin{eqnarray}
   \eqlabel{e8} \sum (xy)_{(-1)}\ot (xy)_{(0)}&=&\sum x_{(-1)} y_{(-1)}\ot x_{(0)} y_{(0)}, \\
  \eqlabel{e9} \sum (xy)_{(0)}\ot (xy)_{(1)}&=&\sum x_{(0)} y_{(0)}\ot x_{(1)} y_{(1)}.
\end{eqnarray}
\item[(d)] The all structure maps of $I$ (multiplication, comultiplication, $H$-bimodule and $H$-bicocomodule)
satisfy the following compatible condition: $\forall x, y\in I$,
 \begin{equation}\eqlabel{e10}
 \begin{split}
\sum (xy)_1\ot (xy)_2=&\sum x_{(-1)} y_{(0)}\ot x_{(0)} y_{(1)}+\sum x_{(-1)} y_1\ot x_{(0)}y_2
+\sum x_{(0)} y_{(-1)}\ot x_{(1)} y_{(0)}\\
&+\sum x_{(0)} y_1\ot x_{(1)} y_2
+\sum x_1 y_{(-1)}\ot x_2 y_{(0)}+\sum x_1 y_{(0)}\ot x_2 y_{(1)}\\
&+\sum x_1y_1\ot x_2y_2.
\end{split}
\end{equation}
\end{enumerate}
In this case, if we regard $H$ and $I$ as subspaces of $H\ltimes_d I$ via $\tau_H$ and $\tau_I$ respectively,
then $H\ltimes_d I$ is a bialgebra Dorroh extension of $H$ by $I$.
\end{proposition}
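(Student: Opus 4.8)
The plan is to exploit that, by hypothesis, $H\ltimes_d I$ is already both an associative algebra and a coassociative coalgebra, so---since in this section a bialgebra carries no unit or counit axiom---it is a bialgebra exactly when its comultiplication is multiplicative, i.e. $\D(uv)=\D(u)\D(v)$ for all $u,v\in H\ltimes_d I$, where $(H\ltimes_d I)\ot(H\ltimes_d I)$ is given the tensor-product algebra structure. By bilinearity it is enough to test this identity on the generators, so I would check it for the four choices $u\in\{(a,0),(0,x)\}$ and $v\in\{(b,0),(0,y)\}$ with $a,b\in H$ and $x,y\in I$, using the products $(a,0)(b,0)=(ab,0)$, $(a,0)(0,y)=(0,ay)$, $(0,x)(b,0)=(0,xb)$ and $(0,x)(0,y)=(0,xy)$ read off from $(a,x)(b,y)=(ab,ay+xb+xy)$.

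The organizing observation is that $(H\ltimes_d I)\ot(H\ltimes_d I)$ splits as the direct sum of the four subspaces $(H,0)\ot(H,0)$, $(H,0)\ot(0,I)$, $(0,I)\ot(H,0)$ and $(0,I)\ot(0,I)$, and that every summand appearing in the defining formula for $\D(h,x)$ lies in exactly one of these four pieces. Consequently, after expanding both $\D(uv)$ and $\D(u)\D(v)$ via the given comultiplication and multiplication, I can compare the two sides one summand at a time; each resulting scalar identity is precisely one of \equref{e1}--\equref{e10}.

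Carrying this out case by case: the product $(ab,0)$ forces equality only in the $(H,0)\ot(H,0)$ component, which is \equref{e1}, giving (a). The product $(0,ay)$ has components in the remaining three pieces, yielding \equref{e4}, \equref{e6}, \equref{e2}; symmetrically $(0,xb)$ yields \equref{e5}, \equref{e7}, \equref{e3}, and these six identities are exactly (b). The remaining case $(0,x)(0,y)=(0,xy)$ is where the real work lies, and it is the main obstacle: here $\D(0,x)\D(0,y)$ expands into nine terms, and the crux is the bookkeeping showing that exactly one of them falls in $(H,0)\ot(0,I)$ (producing \equref{e8}), exactly one in $(0,I)\ot(H,0)$ (producing \equref{e9}, so together (c)), and the seven remaining terms all fall in $(0,I)\ot(0,I)$; equating their sum with $\sum(0,(xy)_1)\ot(0,(xy)_2)$ reproduces precisely the seven-term right-hand side of \equref{e10}, which is (d). The converse is then immediate, since assuming (a)--(d) makes all four generator identities hold and hence $\D$ multiplicative.

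Finally, to justify the closing assertion, I would check directly that $(H,0)$ is a subbialgebra---it is closed under the multiplication and $\D(a,0)=\sum(a_1,0)\ot(a_2,0)\in(H,0)\ot(H,0)$---and that $(0,I)$ is a biideal: it is a two-sided ideal by the multiplication formula, and it is a coideal because every term of $\D(0,x)$ has at least one tensor factor in $(0,I)$, so $\D(0,I)\ss(0,I)\ot(H\ltimes_d I)+(H\ltimes_d I)\ot(0,I)$. Together with $H\ltimes_d I=(H,0)\op(0,I)$, this matches \deref{bialg} and shows $H\ltimes_d I$ is a bialgebra Dorroh extension of $H$ by $I$.
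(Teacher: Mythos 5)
Your proposal is correct and follows essentially the same route as the paper's proof: reduce multiplicativity of $\D$ to the four generator cases, compare components in the direct-sum decomposition of $(H\ltimes_d I)\ot(H\ltimes_d I)$ to recover \equref{e1}--\equref{e10}, and then verify directly that $(H,0)$ is a subbialgebra and $(0,I)$ a biideal. Your componentwise bookkeeping for the nine terms of $\D(0,x)\D(0,y)$ simply makes explicit the ``straightforward computation'' the paper leaves to the reader.
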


\begin{proof}
Note that $H\ltimes_d I$ is a bialgebra if and only if $\D((a,x)(b,y))=\D(a,x)\D(b,y)$ for any
$(a,x), (b,y)\in H\ltimes_d I$. Clearly, $\D((a,x)(b,y))=\D(a,x)\D(b,y)$ if and only if
$\D((a,0)(b,0))=\D(a,0)\D(b,0)$,  $\D((a,0)(0,y))=\D(a,0)\D(0,y)$,
$\D((0,x)(b,0))=\D(0,x)\D(b,0)$ and $\D((0,x)(0,y))=\D(0,x)\D(0,y)$.
Then a straightforward computation shows that $\D((a,0)(b,0))=\D(a,0)\D(b,0)$ is equivalent to Eq.\equref{e1}. Similarly, $\D((a,0)(0,y))=\D(a,0)\D(0,y)$ is equivalent to the three equations Eq.\equref{e2}, Eq.\equref{e4} and Eq.\equref{e6},
$\D((0,x)(b,0))=\D(0,x)\D(b,0)$ is equivalent to the three equations Eq.\equref{e3}, Eq.\equref{e5} and Eq.\equref{e7},
and $\D((0,x)(0,y))=\D(0,x)\D(0,y)$ is equivalent to the three equations Eq.\equref{e8}, Eq.\equref{e9} and Eq.\equref{e10}.

Now assume that $H\ltimes_d I$ is a bialgebra. Then $H$ is also a bialgebra by (a).
By regarding $H=\tau_H(H)$ and $I=\tau_I(I)$, one can see that $H$ is a subbialgebra of $H\ltimes_d I$ and $I$ is a biideal of $H\ltimes_d I$.
Therefore, $H\ltimes_d I$ is a bialgebra Dorroh extension of $H$ by $I$.
\end{proof}

\begin{definition}
A pair $(H,I)$ as stated in \prref{prop1.2} is called a Dorroh pair of bialgebras.	
\end{definition}

\begin{remark}
Let $(H,I)$ be a Dorroh pair of bialgebras. Then
	
(a) Though $I$ is both an algebra and a coalgebra, $I$ is not a bialgebra
unless $\sum x_{(-1)} y_{(0)}\ot x_{(0)} y_{(1)}+\sum x_{(-1)} y_1\ot x_{(0)}y_2
 +\sum x_{(0)} y_{(-1)}\ot x_{(1)} y_{(0)}+\sum x_{(0)} y_1\ot x_{(1)} y_2
 +\sum x_1 y_{(-1)}\ot x_2 y_{(0)}+\sum x_1 y_{(0)}\ot x_2 y_{(1)}=0$ for all $x, y\in I$.

(b) $I$ is an $H$-Hopf bimodule  by Eqs.\equref{e4}-\equref{e7}. $I$ is a left $H$-module coalgebra
and a right $H$-module coalgebra by Eq.\equref{e2} and Eq.\equref{e3}. Moreover,
$I$ is a left $H$-comodule algebra and a right $H$-comodule algebra by Eq.\equref{e8} and Eq.\equref{e9}.

(c) The comultiplication $\D_I$ of $I$ is different from $\D_{H\ltimes_d I}|_I$,
the restriction of the comultiplication $\D_{H\ltimes_d I}$ of $H\ltimes_d I$ on $I$.
\end{remark}

\begin{proposition}\prlabel{prop1.5}
Let $A$ be a bialgebra Dorroh extension of $H$ by $I$. Then $(H,I)$ is a Dorroh pair of bialgebras
and $A\cong H\ltimes_d I$ as bialgebras, where the comultiplication $\D_I$ and the comodule structure maps $\rho_l$ and $\rho_r$
of $I$ are given respectively by the following compositions
$$\begin{array}{c}
\D_I: I\hookrightarrow A\xrightarrow{\D_A}A\ot A\xrightarrow{\pi_I\ot \pi_I}I\ot I ,\\
\rho_l: I\hookrightarrow A\xrightarrow{\D_A}A\ot A\xrightarrow{\pi_H\ot \pi_I}H\ot I ,\\
\rho_r: I\hookrightarrow A\xrightarrow{\D_A}A\ot A\xrightarrow{\pi_I\ot \pi_H}I\ot H ,\\
\end{array}$$
$\pi_H: A\ra H$ and $\pi_I: A\ra I$ are the projections
corresponding to the direct sum decomposition $A=H\op I$ of vector spaces.
\end{proposition}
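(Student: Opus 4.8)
The plan is to deduce everything from the two one-sided structural results of \cite{YC} together with \prref{prop1.2}. The first observation is that a bialgebra Dorroh extension is at once an algebra Dorroh extension and a coalgebra Dorroh extension of $H$ by $I$: a subbialgebra is both a subalgebra and a subcoalgebra, a biideal is both an ideal and a coideal, and the vector space decomposition $A=H\op I$ is shared. Hence both structural results of \cite{YC} apply to the \emph{same} data.

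Applying the algebra result of \cite{YC}, I would conclude that $(H,I)$ is a Dorroh pair of algebras, where the $H$-bimodule structure on $I$ is obtained by restricting the multiplication of $A$ (this lands in $I$ because $I$ is an ideal), and that the canonical linear map $\phi:A\to H\ltimes_d I$, $h+x\mapsto(h,x)$, is an algebra isomorphism. Dually, applying the coalgebra result of \cite{YC}, $(H,I)$ is a Dorroh pair of coalgebras and the \emph{same} map $\phi$ is a coalgebra isomorphism. The content here is that for $x\in I$ the coideal condition $\D_A(I)\ss I\ot A+A\ot I$ forces the $H\ot H$-component $(\pi_H\ot\pi_H)\D_A(x)$ to vanish, so $\D_A(x)$ splits into exactly the three pieces lying in $H\ot I$, $I\ot H$ and $I\ot I$; these components are precisely $\rho_l(x)$, $\rho_r(x)$ and $\D_I(x)$ as displayed, and they reproduce the Dorroh coproduct of $H\ltimes_d I$.

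With this in hand, the essential step is simply to note that $\phi$ is a \emph{single} linear isomorphism that is simultaneously an algebra map and a coalgebra map. Since $A$ is a bialgebra, transporting its structure along $\phi$ endows $H\ltimes_d I$ with a bialgebra structure and makes $\phi$ a bialgebra isomorphism; in particular $H\ltimes_d I$ is a bialgebra. By \prref{prop1.2} this is equivalent to conditions (a)--(d), so $(H,I)$ is a Dorroh pair of bialgebras by definition, and $A\cong H\ltimes_d I$ as bialgebras via $\phi$.

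The main obstacle is the coalgebra bookkeeping in the second step: one must check that the three comodule compatibility identities defining a Dorroh pair of coalgebras follow from the coassociativity of $\D_A$, and that the $H\ot H$-component really vanishes. Both are exactly the content of the coalgebra theorem in \cite{YC}, so once that result is invoked the remaining argument is the formal observation that one and the same map carries both structures, yielding a genuine bialgebra isomorphism rather than merely two separate isomorphisms.
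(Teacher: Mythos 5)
Your proposal is correct and follows essentially the same route as the paper: invoke the algebra and coalgebra structure theorems of \cite{YC} to see that $(H,I)$ is simultaneously a Dorroh pair of algebras and of coalgebras with the \emph{same} canonical linear isomorphism serving as both an algebra and a coalgebra isomorphism, conclude that $H\ltimes_d I$ is a bialgebra because $A$ is, and then apply \prref{prop1.2}. The extra detail you supply about the vanishing of the $H\ot H$-component of $\D_A(x)$ for $x\in I$ is exactly the content already contained in the cited coalgebra result, so nothing further is needed.
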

\begin{proof}
By \cite{YC}, one knows that $(H,I)$ is both a Dorroh  pair of algebras and a Dorroh  pair of coalgebras.
Moreover, the canonical linear isomorphism $H\ltimes_dI\ra A$, $(h, x)\mapsto h+x$
is both an algebra isomorphism and a coalgebra isomorphism. This implies that $H\ltimes_dI$ is a bialgebra since $A$ is a bialgebra. Hence $(H,I)$ is a Dorroh  pair of bialgebras by \prref{prop1.2}.
\end{proof}

In the following, a Hopf algebra means a usual Hopf algebra which is unital as an algebra and counital as a coalgebra and
has an antipode. For a Hopf algebra $H$, let $1_H$ (or 1), $\ep_H$ (or $\ep$) and $S_H$ (or $S$) denote its identity, counit and antipode as usual,
respectively.

Note that $A$ is a bialgebra Dorroh extension of $H$ by $I$ if and only if there is a bialgebra projection $\pi$ from $A$ onto $H$ with ${\rm Ker}(\pi)=I$.
Radford considered a bialgebra with a projection onto a Hopf algebra in \cite{Rad}.

Let $(H,I)$ be a Dorroh pair of bialgebras and $A=H\ltimes_d I$. Then by \prref{prop1.2}, $A$ is a bialgebra Dorroh extension of $H$ by $I$.
Assume that $H$ is a Hopf algebra with antipode $S$.
Then $A$ is a usual bialgebra with identity $(1_H,0)$ and counit $(\ep_H, 0)$, $\pi_H: A\ra H$ and $\tau_H: H\ra A$ are bialgebra maps and $\pi_H\circ\tau_H=1$. Define $\Pi={\rm id}_A*(\tau_H\circ S\circ\pi_H)$ in the convolution algebra ${\rm Hom}(A,A)$ and $B=\Pi(A)$. Then $B$ is a subalgebra of $A$ and $B$ has a (unique) coalgebra structure such that $\Pi: A\ra B$ is a coalgebra map. Moreover, $B=\{a\in A\mid (1\ot\pi_H)\D(a)=a\ot 1\}$ is a braided bialgebra in the category $_H^H \mathcal{YD}$ of Yetter-Drinfeld $H$-modules and $A\cong B\sharp H$, the biproduct of $B$ by $H$, as bialgebras, see \cite{Rad}.

In the following proposition, we describe the relation between the braided bialgebra $B$ and the biideal $I$.

\begin{proposition}\prlabel{}
Assume that $H$ a Hopf algebra with antipode $S$.
Then the following hold.
\begin{enumerate}
\item[(a)] Let $(H,I)$ be a Dorroh pair of bialgebras and $A=H\ltimes_dI$. Then with the above notations, $B=(k1,I^{coH})$ and $A\cong B\sharp H$ as bialgebras, where $I^{coH}=\{x\in I\mid\rho_r(x)=x\ot 1\}$, the $H$-coinvariants of $I$.
\item[(b)] Let $B$ be a braided bialgebra in the Yetter-Drinfeld category $_H^H \mathcal{YD}$ and $A=B\sharp H$. Then $(H, I)$ is a Dorroh pair of bialgebras and $A\cong H\ltimes_dI$ as bialgebras, where $I=\Ker(\ep_B)\sharp H$ and $\ep_B$ is the counit of $B$.
\end{enumerate}
\end{proposition}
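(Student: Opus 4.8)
The plan is to lean on the Radford biproduct machinery recalled just before the statement and, for part (a), to make the coinvariant subalgebra $B$ completely explicit by a direct computation, while part (b) will be reduced to \prref{prop1.5}.

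For part (a), Radford's theorem already yields $A\cong B\sharp H$ with $B=\{a\in A\mid(1\ot\pi_H)\D(a)=a\ot 1\}$, so the only remaining task is to identify this space. First I would apply $1\ot\pi_H$ to the four-term comultiplication of $A=H\ltimes_dI$. Since $\pi_H$ annihilates the $I$-component, the second and fourth summands drop out and one is left with
\[(1\ot\pi_H)\D(h,x)=\sum(h_1,0)\ot h_2+\sum(0,x_{(0)})\ot x_{(1)}.\]
Next I would impose the coinvariance condition $(1\ot\pi_H)\D(h,x)=(h,x)\ot 1_H$ and split it along the direct sum $A\ot H=((H,0)\ot H)\op((0,I)\ot H)$. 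The $(H,0)$-part reads $\sum h_1\ot h_2=h\ot 1_H$, and applying $\ep_H\ot 1$ forces $h=\ep_H(h)1_H\in k1$; the $(0,I)$-part reads $\sum x_{(0)}\ot x_{(1)}=x\ot 1_H$, i.e. $\rho_r(x)=x\ot 1$, which is exactly $x\in I^{coH}$. Both conditions are clearly also sufficient, so $B=(k1,I^{coH})$ and $A\cong B\sharp H$ follows.

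For part (b), I would instead invoke \prref{prop1.5}, so it suffices to exhibit $A=B\sharp H$ as a bialgebra Dorroh extension of $H$ by $I=\Ker(\ep_B)\sharp H$ in the sense of \deref{bialg}. The biproduct comes equipped with the bialgebra maps $\iota\colon H\ra B\sharp H$, $h\mapsto 1_B\sharp h$, and $\pi\colon B\sharp H\ra H$, $b\sharp h\mapsto\ep_B(b)h$, satisfying $\pi\iota=\mathrm{id}_H$; thus $\iota(H)$ is a subbialgebra isomorphic to $H$. Since $\pi$ is a bialgebra map, its kernel $\Ker(\pi)=\Ker(\ep_B)\sharp H=I$ is simultaneously an ideal and a coideal, hence a biideal. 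Finally the decomposition $B=k1_B\op\Ker(\ep_B)$ gives $B\sharp H=(1_B\sharp H)\op(\Ker(\ep_B)\sharp H)=\iota(H)\op I$ as vector spaces. Therefore $A$ is a bialgebra Dorroh extension of $H$ by $I$, and \prref{prop1.5} yields that $(H,I)$ is a Dorroh pair of bialgebras with $A\cong H\ltimes_dI$ as bialgebras.

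The main obstacle is the bookkeeping in (a): one must correctly track which summands of the four-term coproduct survive $1\ot\pi_H$ and then argue that the two halves of the coinvariance identity genuinely decouple along $A=H\op I$; the passage from $\sum h_1\ot h_2=h\ot 1_H$ to $h\in k1$ also quietly uses that $H$ is counital, which is available since $H$ is a Hopf algebra. Part (b) is essentially formal once the standard biproduct projection $\pi$ is recalled, the only point requiring care being the verification that $\pi$ is indeed a bialgebra map so that $I=\Ker(\pi)$ is a biideal.
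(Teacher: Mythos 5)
Your proposal is correct and follows essentially the same route as the paper: in (a) you compute $(1\ot\pi_H)\D(h,x)$, split the coinvariance condition along $A=H\op I$ (the paper does this by applying $\pi_H\ot 1$ and $\pi_I\ot 1$), and conclude $B=(k1,I^{coH})$; in (b) you use the biproduct maps $\iota$ and $\pi$ with $\pi\iota=\mathrm{id}_H$ to realize $A=B\sharp H$ as a bialgebra Dorroh extension of $H$ by $\Ker(\pi)=\Ker(\ep_B)\sharp H$ and then invoke \prref{prop1.5}. The only (harmless) difference is that you spell out the vector space decomposition $B\sharp H=\iota(H)\op I$ via $B=k1_B\op\Ker(\ep_B)$, which the paper leaves implicit.
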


\begin{proof}
(a) Let $(h,x)\in A$. Then $(h,x)\in B$ if and only if $(1\ot\pi_H)\D(h,x)=(h,x)\ot 1$. Now we have $(1\ot\pi_H)\D(h,x)=\sum(h_1,0)\ot h_2+\sum(0,x_{(0)})\ot x_{(1)}$.
Suppose $\sum(h_1,0)\ot h_2+\sum(0,x_{(0)})\ot x_{(1)}=(h,x)\ot 1$. Then by applying $\pi_H\ot 1$ and $\pi_I\ot 1$ to the equation respectively, one gets $\sum h_1\ot h_2=h\ot 1$ and $\sum x_{(0)}\ot x_{(1)}=x\ot 1$. Hence $h=\ep_H(h)1\in k1$ and $x\in I^{coH}$, i.e., $(h,x)\in(k1, I^{coH})$. Conversely, if $(h,x)\in(k1, I^{coH})$, then $(h,x)\in B$ by the discussion above. Therefore, $B=(k1,I^{coH})$
and $A\cong B\sharp H$ as bialgebras.

(b) Let $\iota: H\ra B\sharp H=A$ and $\pi: A=B\sharp H\ra H$ be defined by
$\iota(h)=1\sharp h$ and $\pi(b\sharp h)=\ep_B(b)h$ respectively, where $h\in H$ and $b\in B$. Then by \cite{Rad}, $\iota$ and $\pi$ are bialgebra maps and $\pi\circ\iota=1$. Regarding $\iota$ as an embedding and $H=\iota(H)$, then $A$ is a bialgebra Dorroh extension of $H$ by $I$ with $I={\rm Ker}(\pi)=\Ker(\ep_B)\sharp H$. By \prref{prop1.5}, $(H, I)$ is a Dorroh pair of bialgebras and $A\cong H\ltimes_dI$ as bialgebras.
\end{proof}

\begin{definition}\delabel{Hopfalg}
	Let $H$ be a Hopf algebra. A Hopf algebra $A$ is called a Hopf algebra Dorroh extension of $H$ if $H$
	is a Hopf subalgebra of $A$ and there exists a Hopf ideal $I$ of $A$ such that $A=H\op I$ as vector spaces.	
	In this case, $A$ is also called a Hopf algebra Dorroh extension of $H$ by $I$.
\end{definition}

Let $(H,I)$ be a Dorroh pair of bialgebras. Then by \prref{prop1.2},
$H\ltimes_d I$ is a bialgebra Dorroh extension of $H$ by $I$. That is, under the identifications
$H=\tau_H(H)$ and $I=\tau_I(I)$, $H$ is a subbialgebra of $H\ltimes_d I$
and $I$ is a biideal of $H\ltimes_d I$.

\begin{proposition}\prlabel{prop1.7}
Let $(H,I)$ be a Dorroh pair of bialgebras. Assume that $H$ is a Hopf algebra with antipode $S_H$.
Then $H\ltimes_d I$ is a Hopf algebra Dorroh extension of $H$ by $I$ if and only if
there exists a linear endomorphism $S_I$ of $I$ such that for any $x\in I$,
$$\begin{array}{l}
\sum S_H(x_{(-1)})x_{(0)}+\sum S_I(x_{(0)})x_{(1)}+\sum S_I(x_1)x_2=0,\\
\sum x_{(-1)}S_I(x_{(0)})+\sum x_{(0)}S_H(x_{(1)})+\sum x_1S_I(x_2)=0.\\
\end{array}$$
\end{proposition}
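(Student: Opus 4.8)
The plan is to use the standard criterion that a bialgebra with identity and counit is a Hopf algebra precisely when ${\rm id}_A$ admits a two-sided convolution inverse in $\Hom(A,A)$, which is then the antipode. Since $(H,I)$ is a Dorroh pair of bialgebras and $H$ is unital and counital, $A=H\ltimes_d I$ is already a usual bialgebra with identity $(1_H,0)$ and counit $\ep_A$ given by $\ep_A(h,x)=\ep_H(h)$, as observed above. I would first note that in the forward direction the antipode is \emph{forced} to be block-diagonal: if $A$ is a Hopf algebra Dorroh extension of $H$ by $I$ with antipode $S_A$, then $\tau_H(H)$ being a Hopf subalgebra together with the uniqueness of the antipode of $H$ gives $S_A(h,0)=(S_H(h),0)$, while $I$ being a Hopf ideal gives $S_A(I)\ss I$, so that $S_I:=\pi_I\circ S_A\circ\tau_I$ satisfies $S_A(0,x)=(0,S_I(x))$ and hence $S_A(h,x)=(S_H(h),S_I(x))$ by linearity.

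This reduces both implications to a single computation. For an arbitrary linear endomorphism $S_I$ of $I$, set $S_A(h,x)=(S_H(h),S_I(x))$ and evaluate the two convolution products against the comultiplication $\D$ of $H\ltimes_d I$. Applying ${\rm id}_A\ot S_A$ to the four summands of $\D(h,x)$ and multiplying out via $(a,x)(b,y)=(ab,ay+xb+xy)$, the $H$-parts collapse to $\ep_H(h)(1_H,0)$ using $\sum h_1S_H(h_2)=\ep_H(h)1_H$ and the counit axiom of the left $H$-comodule $I$, while the $I$-parts assemble into $(0,\ \sum x_{(-1)}S_I(x_{(0)})+\sum x_{(0)}S_H(x_{(1)})+\sum x_1S_I(x_2))$; symmetrically, $S_A*{\rm id}_A$ produces $\ep_H(h)(1_H,0)+(0,\ \sum S_H(x_{(-1)})x_{(0)}+\sum S_I(x_{(0)})x_{(1)}+\sum S_I(x_1)x_2)$. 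Comparing with $\ep_A(h,x)(1_H,0)=\ep_H(h)(1_H,0)$, the identities ${\rm id}_A*S_A=S_A*{\rm id}_A=u_A\ep_A$ hold if and only if these two $I$-valued sums vanish for every $x\in I$, which are exactly the two displayed equations.

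Both directions then follow. In the forward direction, the antipode axioms applied to the block-diagonal $S_A$ yield the two equations. Conversely, given $S_I$ satisfying them, the map $S_A(h,x)=(S_H(h),S_I(x))$ is a two-sided convolution inverse of ${\rm id}_A$, hence an antipode, so $A$ is a Hopf algebra; moreover $S_A$ respects the block decomposition, so $S_A(\tau_H(H))\ss\tau_H(H)$ makes $H$ a Hopf subalgebra and $S_A(I)\ss I$ upgrades the biideal $I$ (from \prref{prop1.2}) to a Hopf ideal, giving a Hopf algebra Dorroh extension of $H$ by $I$. I do not expect a genuine obstacle here; the only points requiring care are recognizing that the block-diagonal shape of $S_A$ is forced rather than assumed in the forward direction, and the bookkeeping that makes the $H$-components cancel via the counitality of the $H$-coaction on $I$, so that the antipode condition localizes precisely to the $I$-components.
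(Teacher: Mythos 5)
Your proposal is correct and follows essentially the same route as the paper: in the forward direction the antipode is shown to be block-diagonal, $S_A(h,x)=(S_H(h),S_I(x))$, because $\tau_H(H)$ is a Hopf subalgebra and $I$ is a Hopf ideal, and then both implications reduce to the same convolution computation whose $H$-component gives $\ep_H(h)1_H$ and whose $I$-component is exactly the two displayed sums. (One small inaccuracy of narration: the cross terms $\sum(S_H(x_{(-1)}),0)(0,x_{(0)})$ and $\sum(0,S_I(x_{(0)}))(x_{(1)},0)$ land in $(0,I)$ automatically by the multiplication rule of $H\ltimes_d I$, so no counit axiom of the $H$-coaction is needed for the $H$-parts to collapse; this does not affect the validity of the argument.)
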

\begin{proof}
By \prref{prop1.2}, $H\ltimes_d I$ is a bialgebra Dorroh extension of $H$ by $I$,
where $H$ and $I$ are regarded as subspaces of $H\ltimes_d I$ under the identifications $H=\tau_H(H)$ and $I=\tau_I(I)$.
Since $H$ is a Hopf algebra, $H\ltimes_d I$ has an identity $(1_H, 0)$ and a counit
$(\ep_H, 0)$.

Assume that there exists a linear endomorphism $S_I$ of $I$ such that
$$\begin{array}{c}
\sum S_H(x_{(-1)})x_{(0)}+\sum S_I(x_{(0)})x_{(1)}+\sum S_I(x_1)x_2=0,\\
\sum x_{(-1)}S_I(x_{(0)})+\sum x_{(0)}S_H(x_{(1)})+\sum x_1S_I(x_2)=0\\
\end{array}$$
for any $x\in I$. Define  a linear map $S: H\ltimes_d I\ra H\ltimes_d I$ by
$S(h, x)=(S_H(h), S_I(x))$ for any $(h,x)\in H\ltimes_d I$. Then for any $(h,x)\in H\ltimes_d I$,
$$\begin{array}{rl}
\sum S((h,x)_1)(h,x)_2=&\sum S(h_1, 0)(h_2, 0)+\sum S(x_{(-1)}, 0)(0, x_{(0)})\\
&+\sum S(0, x_{(0)})(x_{(1)}, 0)+\sum S(0, x_1)(0, x_2)\\
=&\sum (S_H(h_1), 0)(h_2, 0)+\sum (S_H(x_{(-1)}), 0)(0, x_{(0)})\\
&+\sum(0, S_I(x_{(0)}))(x_{(1)}, 0)+\sum (0, S_I(x_1))(0, x_2)\\
=&\sum (S_H(h_1)h_2, 0)+\sum (0, S_H(x_{(-1)})x_{(0)})\\
&+\sum(0, S_I(x_{(0)})x_{(1)})+\sum (0, S_I(x_1)x_2)\\
=&(\sum S_H(h_1)h_2, \sum S_H(x_{(-1)})x_{(0)}
+\sum S_I(x_{(0)})x_{(1)}+\sum S_I(x_1)x_2)\\
=&(\ep_H(h)1_H, 0)=\ep(h,x) (1_H, 0),
\end{array}$$
and similarly $\sum(h, x)_1S((h,x)_2)=\ep(h,x) (1_H, 0)$.
Thus, $H\ltimes_d I$ is a Hopf algebra. Clearly, $H$ is a Hopf subalgebra of $H\ltimes_d I$
and $I$ is a Hopf ideal of $H\ltimes_d I$. Hence $H\ltimes_d I$ is a Hopf algebra Dorroh extension
of $H$ by $I$.

Conversely, assume that $H\ltimes_d I$ is a Hopf algebra Dorroh extension
of $H$ by $I$. Then $H$ is a Hopf subalgebra of $H\ltimes_d I$
and $I$ is a Hopf ideal of $H\ltimes_d I$. Hence $S(h, 0)=(S_H(h), 0)$
for any $h\in H$, and there is a linear endomorphism $S_I$ of $I$ such that
$S(0,x)=(0, S_I(x))$ for any $x\in I$, where $S$ is the antipode of $H\ltimes_d I$.
Now let $x\in I$. Then $\sum S((0,x)_1)(0,x)_2=\ep(0,x) (1_H, 0)=0$.
On the other hand, we have
$$\begin{array}{rl}
\sum S((0,x)_1)(0,x)_2
=&\sum S(x_{(-1)}, 0)(0, x_{(0)})+\sum S(0,x_{(0)})(x_{(1)},0)+\sum S(0, x_1)(0, x_2)\\
=&\sum (S_H(x_{(-1)}), 0)(0, x_{(0)})+\sum (0,S_I(x_{(0)}))(x_{(1)},0)+\sum (0, S_I(x_1))(0, x_2)\\
=&\sum (0, S_H(x_{(-1)})x_{(0)})+\sum (0,S_I(x_{(0)})x_{(1)})+\sum (0, S_I(x_1)x_2)\\
=&(0, \sum S_H(x_{(-1)})x_{(0)}+\sum S_I(x_{(0)})x_{(1)}+\sum S_I(x_1)x_2).\\
\end{array}$$
It follows that $\sum S_H(x_{(-1)})x_{(0)}+\sum S_I(x_{(0)})x_{(1)}+\sum S_I(x_1)x_2=0$.
Similarly, from $\sum(0,x)_1S((0,x)_2)=\ep(0,x) (1_H, 0)=0$, one gets
$\sum x_{(-1)}S_I(x_{(0)})+\sum x_{(0)}S_H(x_{(1)})+\sum x_1S_I(x_2)=0$.
\end{proof}

\begin{definition}
A pair $(H,I)$ as stated in \prref{prop1.7} is called a Dorroh pair of Hopf algebras.
\end{definition}

\begin{corollary}\colabel{coro1.9}
Let $(H,I)$ be a Dorroh pair of Hopf algebras. With the notations of  \prref{prop1.7},
for any $h\in H$ and $x, y\in I$, we have
\begin{enumerate}
\item[(a)] $S_I(hx)=S_I(x)S_H(h)$, $S_I(xh)=S_H(h)S_I(x)$ and $S_I(xy)=S_I(y)S_I(x)$,
\item[(b)] $\sum S_I(x)_{(0)}\ot S_I(x)_{(1)}=\sum S_I(x_{(0)})\ot S_H(x_{(-1)})$,
$\sum S_I(x)_{(-1)}\ot S_I(x)_{(0)}=\sum S_H(x_{(1)})\ot S_I(x_{(0)})$ and
$\sum S_I(x)_1\ot S_I(x)_2=\sum S_I(x_2)\ot S_I(x_1)$.
\end{enumerate}
\end{corollary}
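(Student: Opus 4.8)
The plan is to exploit the fact that, by \prref{prop1.7}, the hypothesis that $(H,I)$ is a Dorroh pair of Hopf algebras means precisely that $A=H\ltimes_dI$ is a (usual) Hopf algebra, whose antipode $S$ is the map $S(h,x)=(S_H(h),S_I(x))$ constructed in the proof of that proposition. Both assertions then reduce to specializing the two standard properties of the antipode of any Hopf algebra — that $S$ is an algebra anti-homomorphism and a coalgebra anti-homomorphism — to $A$, and reading off components in the decomposition $A=H\op I$.

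For part (a), first I would record from the multiplication rule of $H\ltimes_dI$ the identities $(h,0)(0,x)=(0,hx)$, $(0,x)(h,0)=(0,xh)$ and $(0,x)(0,y)=(0,xy)$. Applying $S(uv)=S(v)S(u)$ to each of these products, and using $S(h,0)=(S_H(h),0)$ together with $S(0,x)=(0,S_I(x))$, the right-hand sides become $(0,S_I(x)S_H(h))$, $(0,S_H(h)S_I(x))$ and $(0,S_I(y)S_I(x))$ respectively. Comparing $I$-components with the left-hand sides $(0,S_I(hx))$, $(0,S_I(xh))$ and $(0,S_I(xy))$ yields the three formulas of (a).

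For part (b), I would apply the coalgebra anti-homomorphism property $\sum S(a)_1\ot S(a)_2=\sum S(a_2)\ot S(a_1)$ with $a=(0,x)$. On the left I expand $\D(0,S_I(x))$ using the comultiplication of $H\ltimes_dI$; on the right I apply $S\ot S$ to the flipped coproduct of $(0,x)$, again using the explicit forms of $\D$ and of $S$. Both sides lie in $(H\op I)^{\ot 2}=(H\ot H)\op(H\ot I)\op(I\ot H)\op(I\ot I)$, and the three identities of (b) emerge from equating, respectively, the $I\ot H$, $H\ot I$ and $I\ot I$ components of the two expansions.

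The computations are entirely mechanical, so I do not expect a genuine obstacle; the only point requiring care is the bookkeeping in (b), namely tracking into which of the four tensor summands of $A\ot A$ each term falls, so that the correct components are matched. This is precisely why the statement is a corollary of \prref{prop1.7} rather than an independent result: it is just the transcription, component by component, of the familiar anti-multiplicativity of $S$ on the Hopf algebra $H\ltimes_dI$.
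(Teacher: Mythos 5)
Your proposal is correct and matches the paper's own proof essentially verbatim: the paper likewise identifies the antipode of $H\ltimes_dI$ as $S(h,x)=(S_H(h),S_I(x))$ via \prref{prop1.7}, derives (a) from $S((h,0)(0,x))=S(0,x)S(h,0)$ and its two analogues, and derives (b) by expanding $\sum (S(0,x))_1\ot (S(0,x))_2=\sum S((0,x)_2)\ot S((0,x)_1)$ and applying the projections $\pi_I\ot\pi_H$, $\pi_H\ot\pi_I$, $\pi_I\ot\pi_I$. No gaps; the component bookkeeping you flag is exactly what the paper does.
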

\begin{proof}
By \prref{prop1.7}, $H\ltimes_d I$ is a Hopf algebra with the antipode $S$ given by
$S(h, x)=(S_H(h), S_I(x))$, $(h, x)\in H\ltimes_d I$. Let $h\in H$ and $x, y\in I$.
Since $S$ is an algebra antimorphism, we have $S((h,0)(0,x))=S(0,x) S(h,0)$,
$S((0,x)(h,0))=S(h,0)S(0,x)$ and $S((0,x)(0,y))=S(0,y) S(0,x)$. They are equivalent to
$S_I(hx)=S_I(x)S_H(h)$, $S_I(xh)=S_H(h)S_I(x)$ and $S_I(xy)=S_I(y)S_I(x)$, respectively.
This shows (a).
Since $S$ is also a coalgebra antimorphism, we have
$\sum (S(0,x))_1\ot (S(0,x))_2=\sum S((0,x)_2)\ot S((0,x)_1)$, that is,
$$\begin{array}{rl}
&\sum (S_I(x)_{(-1)}, 0)\ot(0, S_I(x)_{(0)})+\sum(0, S_I(x)_{(0)})\ot(S_I(x)_{(1)}, 0)
+\sum(0, S_I(x)_1)\ot (0, S_I(x)_2)\\
=&\sum(0, S_I(x_{(0)}))\ot(S_H(x_{(-1)}), 0)+\sum(S_H(x_{(1)}), 0)\ot(0, S_I(x_{(0)}))
+\sum(0, S_I(x_2))\ot(0, S_I(x_1)).\\
\end{array}$$
Applying $\pi_I\ot\pi_H$, $\pi_H\ot\pi_I$ and $\pi_I\ot\pi_I$ to the above equation respectively,
one gets $\sum S_I(x)_{(0)}\ot S_I(x)_{(1)}=\sum S_I(x_{(0)})\ot S_H(x_{(-1)})$,
$\sum S_I(x)_{(-1)}\ot S_I(x)_{(0)}=\sum S_H(x_{(1)})\ot S_I(x_{(0)})$ and
$\sum S_I(x)_1\ot S_I(x)_2=\sum S_I(x_2)\ot S_I(x_1)$. This shows (b).
\end{proof}

\begin{proposition}\prlabel{prop1.10}
Let $A$ be a Hopf algebra Dorroh extension of $H$ by $I$. Then $(H,I)$ is a Dorroh pair of Hopf algebras
and $A\cong H\ltimes_d I$ as Hopf algebras, where the comultiplication $\D_I$ of $I$ and the
comodule structure maps of $I$ are given in \prref{prop1.5}.
\end{proposition}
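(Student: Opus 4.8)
The plan is to reduce everything to \prref{prop1.5} and \prref{prop1.7}. A Hopf algebra Dorroh extension is in particular a bialgebra Dorroh extension, since a Hopf subalgebra is a subbialgebra and a Hopf ideal is a biideal. Hence \prref{prop1.5} applies directly and yields two things at once: $(H,I)$ is a Dorroh pair of bialgebras, with $\D_I$, $\rho_l$ and $\rho_r$ given by the compositions recorded there; and the canonical linear isomorphism $\vf\colon H\ltimes_d I\ra A$, $(h,x)\mapsto h+x$, is an isomorphism of bialgebras. So from the outset $H\ltimes_d I$ and $A$ are identified as bialgebras, and it remains only to transport the Hopf structure and to recognise the antipodal identities of \prref{prop1.7}.

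First I would make $H\ltimes_d I$ into a Hopf algebra. Since $A$ has an antipode $S_A$ and $\vf$ is a bialgebra isomorphism, $H\ltimes_d I$ inherits a Hopf algebra structure with antipode $\vf^{-1}\circ S_A\circ\vf$, and with respect to it $\vf$ is by construction an isomorphism of Hopf algebras; thus $A\cong H\ltimes_d I$ as Hopf algebras. (By uniqueness of the antipode as the convolution inverse of the identity map, any bialgebra isomorphism between Hopf algebras preserves antipodes, so this is the only such structure.) Next I would observe that under $\vf$ the Hopf subalgebra $H$ and the Hopf ideal $I$ of $A$ correspond respectively to $\tau_H(H)$ and $\tau_I(I)$; since a Hopf ideal is stable under the antipode, $\tau_I(I)$ is again a Hopf ideal and $\tau_H(H)$ again a Hopf subalgebra of $H\ltimes_d I$. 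Therefore $H\ltimes_d I$ is itself a Hopf algebra Dorroh extension of $H$ by $I$.

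At this point both hypotheses of \prref{prop1.7} are in force—$(H,I)$ is a Dorroh pair of bialgebras and $H$, being a Hopf subalgebra of $A$, is a Hopf algebra with antipode $S_H=S_A|_H$—so its ``only if'' direction applies verbatim and guarantees a linear endomorphism $S_I$ of $I$ (concretely $S_I=\pi_I\circ S\circ\tau_I$, where $S$ is the transported antipode) satisfying the two displayed equations. By definition this says exactly that $(H,I)$ is a Dorroh pair of Hopf algebras, which together with the Hopf isomorphism above completes the proof. I do not expect a genuine obstacle here, since the structural content is carried by \prref{prop1.5} and \prref{prop1.7}; the only points requiring care are the two standard facts invoked, namely that a bialgebra isomorphism between Hopf algebras preserves antipodes and that a Hopf ideal is stable under the antipode, so that it transports to a Hopf ideal of $H\ltimes_d I$ and the antipodal identities become available through \prref{prop1.7}.
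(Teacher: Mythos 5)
Your proposal is correct and follows essentially the same route as the paper: apply \prref{prop1.5} to get the Dorroh pair of bialgebras and the bialgebra isomorphism $(h,x)\mapsto h+x$, transport the antipode of $A$ along this isomorphism, note that the Hopf subalgebra and Hopf ideal correspond to $\tau_H(H)$ and $\tau_I(I)$, and conclude via \prref{prop1.7}. The extra remarks you include (uniqueness of the antipode, stability of a Hopf ideal under the antipode) are correct and only make the argument slightly more explicit than the paper's.
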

\begin{proof}
By \prref{prop1.5} and its proof, $(H,I)$ is a Dorroh pair of bialgebras
and there is a bialgebra isomorphism $\phi: H\ltimes_d I\ra A$, $(h,x)\mapsto h+x$.
Since $A$ is a Hopf algebra, $H\ltimes_d I$ is also a Hopf algebra with the antipode $S$
given by $S=\phi^{-1}\circ S_A\circ\phi$ and $\phi$ is a Hopf algebra isomorphism, where $S_A$ is the antipode of $A$.
Since $A$ is a Hopf algebra Dorroh extension of $H$ by $I$,
$H\ltimes_d I$ is a Hopf algebra Dorroh extension of $\phi^{-1}(H)$ by $\phi^{-1}(I)$.
Clearly, $\phi^{-1}(H)=\tau_H(H)$ and $\phi^{-1}(I)=\tau_I(I)$.
Thus, under the identifications $H=\tau_H(H)$ and $I=\tau_I(I)$, $H\ltimes_dI$ is a Hopf algebra Dorroh extension of $H$ by $I$, and so $(H,I)$ is a Dorroh pair of Hopf algebras
by \prref{prop1.7}.
\end{proof}

In the rest of this section, we give some examples of Dorroh pairs (or Dorroh extensions) of
bialgebras and Hopf algebras.

\begin{example}
(a) Let $I$ be both an algebra and a coalgebra. Assume that the multiplication and the comultiplication of $I$ satisfy
$$\begin{array}{c}
\D_I(xy)=x\ot y+y\ot x+\sum y_1\ot xy_2+\sum xy_1\ot y_2
+\sum x_1\ot x_2y+\sum x_1y\ot x_2+\sum x_1y_1\ot x_2y_2\\
\end{array}$$
for all $x, y\in I$.
Here $I$ is not necessarily a bialgebra. Note that the ground field $k$ is a Hopf algebra as usual.
Hence $I$ is a $k$-Hopf bimodule with the trivial comodule structures.
One can check that $(k,I)$ is a Dorroh pair of bialgebras, and so
$k\ltimes_d I$ is a usual bialgebra with the identity $(1,0)$ and the counit $\ep$ given by $\ep (\a,x)=\a$,  $(\a,x)\in k\ltimes_d I$.
Moreover, the multiplication and comultiplication of $k\ltimes_d I$ are given by
\begin{align*}
&(\a,x)(\beta,y)=(\a\beta, \a y+\beta x+xy),\\
&\D(\a, x)=(\a,0)\ot (1,0)+(1,0)\ot (0,x)+(0,x)\ot (1,0)+\sum(0,x_1)\ot (0,x_2).
\end{align*}
Clearly $(\a, x)$ is a group-like element if and only if $\a=1$ and $\D_I(x)=x\ot x$.

If there exists a linear map $S_I: I\ra I$ such that
$\sum S_I(x_1)x_2=\sum x_1S_I(x_2)=-S_I(x)-x$,
then it follows from \prref{prop1.7} that $k\ltimes_d I$ is a Hopf algebra
with the antipode $S$ given by $S(\a,x)=(\a, S_I(x))$.

(b)
Let $H$ be a bialgebra and $M$ an $H$-Hopf bimodule. Define a multiplication on $M$ by $xy=0$, $x, y\in M$,
and a comultiplication on $M$ by $\D_M=0$ , respectively. Then $M$ is both an algebra
and a coalgebra. From \prref{prop1.2} and \prref{prop1.7}, one obtains the results
in \cite[Theorem 11 and Corollary 12]{ZT} as follows.
\begin{enumerate}
\item [(i)] The trivial extension $H\ltimes M$ is a bialgebra if and only if $M$ is an $H$-Hopf bimodule
and satisfies $\sum m_{(-1)}x_{(0)}\ot m_{(0)}x_{(1)} +\sum m_{(0)}x_{(-1)}\ot m_{(1)}x_{(0)}=0$ for all $m, x\in M$.
\item [(ii)] Suppose the trivial extension $H\op M$ is
a bialgebra. If $H$ is a Hopf algebra, then so is $H\ltimes M$.
\end{enumerate}

(c) Let $H=\op_{i=0}^{\infty} H_i$ be an $\mathbb{N}$-graded Hopf algebra. Set $I:=\op_{i=1}^{\infty} H_i$. Then $(H_0,I)$ is a Dorroh pair of Hopf algebras and $H\cong H_0\ltimes_d I$ as Hopf algebras.
\end{example}

\section{The ideals of algebra Dorroh extensions}\selabel{sec2}

Throughout this section, let $(A, I)$ be a Dorroh pair of algebras.
Then $A\ltimes_d I$ is an algebra Dorroh extension of $A$ by $I$ as stated in the last section. we will study the ideals of $A\ltimes_d I$.

Let $K\ss A\ltimes_d I$ be a subspace.
Put
\begin{eqnarray*}
B&:=&\{a\in A|(a,x)\in K \text{ for some } x\in I\},\\
J&:=&\{x\in I|(a,x)\in K \text{ for some } a\in A\},\\
Z&:=&\{a\in A|(a,0)\in K\},\\
L&:=&\{x\in I|(0,x)\in K\}.
\end{eqnarray*}
Then $B$ and $Z$ are subspaces of $A$ and $Z\ss B$, $J$ and $L$ are subspaces of $I$ and $L\ss J$.
Define a linear map  $\varphi: J\ra B/Z\ss A/Z$ by $\varphi(x)=a+Z$ if $(a,-x)\in K$.
Obviously, $\varphi$ is a well-defined linear map, and $\mathrm{Im}(\varphi)=B/Z$ and $\mathrm{Ker}(\varphi)=L$.
Thus, there is an induced isomorphism $\overline{\varphi}:J/L\ra B/Z$ such that
$\overline{\varphi}(x+L)=a+Z$ when $(a,-x)\in K$.
In this case, the subspace $K$ can be described as follows:
$$\begin{array}{rl}
K&=\{(a,-x)\in A\ltimes_d I|a\in B, x\in J , \varphi(x)=a+Z\}\\
&=\{(a,-x)\in A\ltimes_d I|a\in B, x\in J , \overline{\varphi}(x+L)=a+Z\}.\\
\end{array}$$

Similarly to the description of ideals of Dorroh extensions of rings in
\cite[Proposition 5]{M}, one can describe the ideals of Dorroh extensions of algebras in the following.

\begin{proposition}\prlabel{prop2.1}
With the above notations, $K$ is an ideal of $A\ltimes_d I$ if and only if
the following (a), (b) and (c) hold if and only if the following (a), (b') and (c') hold.
\begin{enumerate}
\item[(a)]  $Z$ and $B$ are both ideals of $A$, so $B/Z$ is a well-defined $A$-Algebra.
\item[(b)] $J$ is an $A$-subAlgebra of $I$.
\item[(b')] $J$ is an $A$-subbimodule of $I$.
\item[(c)] $\varphi: J\ra B/Z$ is an $A$-Algebra homomorphism,
and if $\varphi(x)=a+Z$ then $ay-xy, ya-yx\in \mathrm{Ker}(\varphi)$ for any $y\in I$.
\item[(c')] $\varphi: J\ra B/Z$ is an $A$-bimodule homomorphism,
and if $\varphi(x)=a+Z$ then $ay-xy, ya-yx\in \mathrm{Ker}(\varphi)$ for any $y\in I$.
\end{enumerate}
\end{proposition}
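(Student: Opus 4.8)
The plan is to reduce the two-sided ideal condition to closure under multiplication by the two types of generating elements $(b,0)$ and $(0,y)$, translate each closure requirement into a statement about $B$, $Z$, $J$, $L$ and $\varphi$, and then bridge the ``bimodule'' formulation $(a),(b'),(c')$ with the ``algebra'' formulation $(a),(b),(c)$ by means of the cross-term condition. Throughout I would write a typical element of $K$ as $(a,-x)$ with $x\in J$ and $\varphi(x)=a+Z$, so that the sign convention in the definition of $\varphi$ matches the minus signs appearing in $(c)$ and $(c')$; note also that $Z\ss B$ always holds, so $B/Z$ is defined once $(a)$ is known.

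\emph{Step 1 (reduction and the four products).} Since $K$ is a subspace and $(b,y)=(b,0)+(0,y)$, distributivity shows that $K$ is a two-sided ideal if and only if it is closed under left and right multiplication by all $(b,0)$ with $b\in A$ and all $(0,y)$ with $y\in I$. The multiplication rule then gives the four products $(a,-x)(b,0)=(ab,-xb)$, $(b,0)(a,-x)=(ba,-bx)$, $(a,-x)(0,y)=(0,ay-xy)$ and $(0,y)(a,-x)=(0,ya-yx)$, and the whole argument rests on matching coordinates of these against the membership criterion for $K$.

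\emph{Step 2 ($K$ ideal $\iff(a),(b'),(c')$).} I would read off each closure condition in turn. Taking $x=0$, so that $a$ ranges over $Z$, the $(b,0)$-products force $ZA\ss Z$ and $AZ\ss Z$; looking at the first coordinates for general $(a,-x)$ forces $BA\ss B$ and $AB\ss B$. Hence $(a)$ holds and $B/Z$ acquires its $A$-algebra structure. The second coordinates of the $(b,0)$-products give $xb,bx\in J$, i.e. $(b')$, together with $\varphi(xb)=ab+Z=\varphi(x)b$ and $\varphi(bx)=b\,\varphi(x)$, i.e. $\varphi$ is an $A$-bimodule map. Finally, membership of $(0,ay-xy)$ and $(0,ya-yx)$ in $K$ is precisely $ay-xy,\,ya-yx\in\Ker(\varphi)$ for all $y\in I$. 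Every implication here is reversible: given $(a),(b'),(c')$ the four products again satisfy the membership criterion for $K$ (the $(b,0)$-products by $(b')$ and the bimodule-map property, the $(0,y)$-products by the cross-term condition), and closure under arbitrary multiplication follows by linearity. This settles the first equivalence directly from the same computations.

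\emph{Step 3 (bridging $(b'),(c')$ with $(b),(c)$).} An $A$-subalgebra is in particular an $A$-subbimodule and an $A$-algebra homomorphism is in particular an $A$-bimodule homomorphism, and the cross-term clause is shared verbatim, so $(b),(c)\Rightarrow(b'),(c')$ is immediate. For the reverse, assuming $(a),(b'),(c')$, let $x,x'\in J$ with $\varphi(x)=a+Z$. Applying the cross-term condition with $y=x'\in I$ yields $ax'-xx'\in\Ker(\varphi)\ss J$; since $ax'\in AJ\ss J$ by $(b')$, the identity $xx'=ax'-(ax'-xx')$ gives $xx'\in J$, so $JJ\ss J$ and $(b)$ holds. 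Taking $\varphi$ of the same identity and using left $A$-linearity gives $\varphi(xx')=\varphi(ax')=a\,\varphi(x')=\varphi(x)\varphi(x')$, so $\varphi$ is multiplicative and $(c)$ holds. The routine content lies in the four multiplications and coordinate matching of Step 2; the main obstacle is exactly this identity $xx'=ax'-(ax'-xx')$ and its image under $\varphi$, which is where the cross-term condition upgrades closure of $J$ under the $A$-action to closure under multiplication, and a bimodule map to an algebra map.
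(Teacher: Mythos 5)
Your proof is correct and follows essentially the same route as the paper's: the paper writes out only the implication from (a), (b$'$), (c$'$) to the ideal property, via the same product computation $(a,-x)(b,y)=(ab,-xb)-(0,-ay+xy)$, and defers the remaining directions to Mesyan's Proposition 5, which are likewise obtained by the coordinate matching you perform in Steps 1--2. Your Step 3 bridge between the bimodule and algebra formulations (the identity $xx'=ax'-(ax'-xx')$ together with the cross-term condition) is sound and simply makes explicit what the paper leaves to the reference.
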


\begin{proof}
It is similar to the proof of \cite[Proposition 5]{M}. Here we only prove that
(a), (b') and (c') imply that $K$ is an ideal of $A\ltimes_d I$.

Let $(a,-x)\in K$. Then $a\in B$, $x\in J$ and $\varphi(x)=a+Z$.
Let $b\in A$ and $y\in I$. Then $ab\in B$ by (a), and $xb\in J$ by (b').
By (c'), $\varphi(xb)=\varphi(x)b=(a+Z)b=ab+Z$, and hence $(ab,-xb)\in K$.
Again by (c'), $ay-xy\in{\rm Ker}(\varphi)$. Hence $ay-xy\in J$
and $\varphi(ay-xy)=0+Z$. This implies $(0, -ay+xy)\in K$. Therefore,
$(a,-x)(b,y)=(ab, ay-xb-xy)=(ab,-xb)-(0, -ay+xy)\in K$. Similarly,
one can show that $(b,y)(a,-x)\in K$. Thus, $K$ is an ideal of $A\ltimes_d I$.
\end{proof}

By \prref{prop2.1}, for any subspaces $Z\ss B\ss  A$ and $J\ss I$, if there exists a linear surjection $\varphi: J\ra B/Z$
such that (a), (b') and (c') (or (a), (b) and (c)) in \prref{prop2.1} are satisfied, then the following set is an ideal of $A\ltimes_d I$:
$$\{(a,-x)\in A\ltimes_d I|a\in B, x\in J , \varphi(x)=a+Z\}.$$
Conversely, any ideal of $A\ltimes_d I$ has this form.

We can also use the induced isomorphism $\overline{\varphi}:J/L\ra B/Z$ to replace the homomorphism $\varphi$.

\begin{corollary}\colabel{coro2.2}
With the above notations, $K$ is an ideal of $A\ltimes_d I$ if and only if
the following (a), (b) and (c) hold if and only if the following (a), (b') and (c') hold.
\begin{enumerate}
\item[(a)]  $Z$ and $B$ are both ideals of $A$, so $B/Z$ is a well-defined $A$-Algebra.
\item[(b)] $J$ is an $A$-subAlgebra of $I$ and $L$ is an $A$-Ideal of $I$, so $J/L$ is a well-defined $A$-Algebra.
\item[(b')] $J$ and $L$ are both $A$-subbimodules of $I$, so $J/L$ is a well-defined $A$-bimodule.
\item[(c)] $\overline{\varphi}: J/L\ra B/Z$ is an $A$-Algebra isomorphism,
and if $\overline{\varphi}(x+L)=a+Z$ then $ay-xy, ya-yx\in L$ for any $y\in I$.
\item[(c')] $\overline{\varphi}: J/L\ra B/Z$ is an $A$-bimodule isomorphism,
and if $\overline{\varphi}(x+L)=a+Z$ then $ay-xy, ya-yx\in L$ for any $y\in I$.
\end{enumerate}
\end{corollary}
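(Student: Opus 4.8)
The plan is to deduce this corollary directly from \prref{prop2.1} by showing that conditions (a), (b), (c) (resp. (a), (b'), (c')) stated here are equivalent to the correspondingly-labelled conditions in \prref{prop2.1}; the result then follows at once. Condition (a) is literally identical in both statements, so nothing is required there. For the remaining conditions I would exploit the two facts recorded just before the corollary: that $L=\mathrm{Ker}(\varphi)$, and that $\overline{\varphi}$ is the map induced by $\varphi$ on the quotient, i.e. $\varphi=\overline{\varphi}\circ\pi$ where $\pi\colon J\ra J/L$ is the canonical projection.

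First I would dispose of the purely formal part of the translation. Since $\pi$ is a surjective $A$-bimodule map (and an algebra map once $L$ is an ideal of $J$) and $\overline{\varphi}$ is a linear isomorphism, the map $\varphi=\overline{\varphi}\circ\pi$ is an $A$-bimodule (resp. $A$-algebra) homomorphism if and only if $\overline{\varphi}$ is; and a bijective homomorphism of this kind is automatically an isomorphism, its inverse being of the same type. Moreover the auxiliary requirement ``$ay-xy,\,ya-yx\in\mathrm{Ker}(\varphi)$'' is word-for-word the requirement ``$ay-xy,\,ya-yx\in L$'', and ``$\varphi(x)=a+Z$'' says exactly ``$\overline{\varphi}(x+L)=a+Z$''. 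Thus, once $J/L$ is known to be a well-defined $A$-bimodule (resp. $A$-algebra), conditions (c') and (c) here are mere restatements of (c') and (c) in \prref{prop2.1}.

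The substantive point is therefore the extra clause in (b) and (b') that $L$ is an $A$-ideal (resp. $A$-subbimodule) of $I$, and this is where I expect the only real work. Starting from the \prref{prop2.1} conditions, I would argue as follows. For $x\in L$ we have $(0,-x)\in K$, so $\varphi(x)=0+Z$; feeding $a=0$ into the clause ``$ay-xy,\,ya-yx\in\mathrm{Ker}(\varphi)$'' of (c') (or (c)) yields $xy=-(0\cdot y-xy)\in L$ and, symmetrically, $yx\in L$ for every $y\in I$, so that $IL,\,LI\ss L$. Since $\varphi$ is an $A$-bimodule homomorphism, $L=\mathrm{Ker}(\varphi)$ is an $A$-subbimodule of $J$, hence $AL,\,LA\ss L$. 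Consequently $L$ is an $A$-ideal of $I$, and since $J\ss I$ this makes $L$ an $A$-ideal of the subalgebra $J$, so that $J/L$ is a well-defined $A$-algebra; thus (b)/(b') here hold.

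Conversely, assuming the conditions of this corollary, condition (b')/(b) of \prref{prop2.1} (that $J$ is an $A$-subbimodule, resp. $A$-subalgebra, of $I$) is part of the hypothesis, and $\varphi=\overline{\varphi}\circ\pi$ is a homomorphism of the appropriate type because both $\overline{\varphi}$ and $\pi$ are; the auxiliary clauses again coincide verbatim. This recovers the \prref{prop2.1} conditions, establishing the equivalence and hence the corollary. The main obstacle, as indicated, is the derivation that $L$ is a two-sided multiplicative ideal of $I$, which hinges on the observation that the elements of $L$ are exactly those $x$ with $\varphi(x)=0$, permitting the specialization $a=0$ in the mixed condition of (c)/(c').
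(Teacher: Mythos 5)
Your proposal is correct and follows exactly the paper's route: the paper's own proof of \coref{coro2.2} is the one-line observation that it follows from \prref{prop2.1} together with $\Ker(\varphi)=L$, and your argument is simply a careful expansion of that, with the only substantive step (that $L=\Ker(\varphi)$ is closed under multiplication by $I$, obtained by specializing $a=0$ in the mixed clause of (c)/(c')) handled correctly.
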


\begin{proof}
It follows from \prref{prop2.1} and $\Ker (\varphi)=L$.
\end{proof}

Let $\tau_A: A\ra A\ltimes_d I$ and
$\tau_I: I\ra A\ltimes_dI$ be the canonical inclusions,
and let $\pi_A: A\ltimes_dI\ra A$ and
$\pi_I: A\ltimes_d I\ra I$ be the canonical projections as in the last section.

\begin{lemma}\lelabel{lem2.3}
With the notations above, $\tau_A$, $\tau_I$ and $\pi_A$ are algebra homomorphisms,
and the sequences
$$0\ra L\overset{\tau_I}{\ra} K\overset{\pi_A}{\ra} B\ra 0,\quad
0\ra Z\overset{\tau_A}{\ra} K\overset{\pi_I}{\ra} J\ra 0$$
are exact, where $K$ is a subspace of $A\ltimes_d I$, and $Z\ss B\ss A$ and $L\ss J\ss I$
are subspaces constructed from $K$ as before.
\end{lemma}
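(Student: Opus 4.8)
The plan is to verify everything directly from the multiplication rule $(a,x)(b,y)=(ab,\,ay+xb+xy)$ and from the defining descriptions of $B$, $J$, $Z$, and $L$. For the homomorphism claims I would simply compute $\tau_A(a)\tau_A(b)=(a,0)(b,0)=(ab,0)=\tau_A(ab)$, $\tau_I(x)\tau_I(y)=(0,x)(0,y)=(0,xy)=\tau_I(xy)$, and $\pi_A((a,x)(b,y))=\pi_A(ab,\,ay+xb+xy)=ab=\pi_A(a,x)\pi_A(b,y)$. It is worth recording that $\pi_I$ is deliberately absent from this list: $\pi_I((a,x)(b,y))=ay+xb+xy\ne xy=\pi_I(a,x)\pi_I(b,y)$ in general, so the two sequences below are exact as sequences of vector spaces only.

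For the first sequence $0\ra L\overset{\tau_I}{\ra} K\overset{\pi_A}{\ra} B\ra 0$ I would first confirm that the maps land where claimed. If $x\in L$ then $(0,x)\in K$ by the definition of $L$, so $\tau_I(L)\ss K$; and $\pi_A(K)\ss B$ with equality, since $B$ is by definition the set of first coordinates of elements of $K$. Exactness at $L$ is then the injectivity of $\tau_I$ (hence of its restriction), and exactness at $B$ is the surjectivity of $\pi_A|_K$ just noted. The middle is the only point with any content: $\Ker(\pi_A|_K)=\{(a,x)\in K\mid a=0\}=\{(0,x)\mid (0,x)\in K\}$, which is exactly $\tau_I(L)=\mathrm{Im}(\tau_I|_L)$ by the definition of $L$.

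The second sequence $0\ra Z\overset{\tau_A}{\ra} K\overset{\pi_I}{\ra} J\ra 0$ is handled symmetrically. Here $\tau_A(Z)\ss K$ because $a\in Z$ means $(a,0)\in K$, and $\pi_I(K)=J$ by the definition of $J$; exactness at $Z$ and at $J$ are again the injectivity of $\tau_A$ and the surjectivity of $\pi_I|_K$. For the middle, $\Ker(\pi_I|_K)=\{(a,x)\in K\mid x=0\}=\{(a,0)\mid (a,0)\in K\}=\tau_A(Z)=\mathrm{Im}(\tau_A|_Z)$, again immediately from the definition of $Z$.

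There is no genuine obstacle here: each assertion collapses to the defining description of one of $B$, $J$, $Z$, $L$, so the proof is essentially a bookkeeping exercise. The one place deserving a moment's care is simply recording that exactness holds in the category of vector spaces rather than of algebras, since $\pi_I$ fails to be multiplicative and $K$ is taken to be an arbitrary subspace, not necessarily an ideal.
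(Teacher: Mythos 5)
Your proof is correct and takes essentially the same route as the paper: both arguments reduce to unwinding the definitions of $B$, $J$, $Z$, $L$ and the multiplication rule, the paper phrasing the middle exactness via $(0,I)\cap K=(0,L)$ and $(A,0)\cap K=(Z,0)$ where you compute the kernels of $\pi_A|_K$ and $\pi_I|_K$ directly. Your explicit remark that $\pi_I$ is not multiplicative, so the sequences are exact only as vector spaces, matches the observation the paper makes immediately after the lemma.
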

\begin{proof}
By the definition of $A\ltimes_d I$, one can see that $\tau_A$, $\tau_I$ and $\pi_A$ are algebra homomorphisms.
By the definitions of $B$, $J$, $Z$ and $L$, we have $\pi_A(K)=B$, $(0,I)\cap K=(0,L)$,
$\pi_I(K)=J$ and $(A,0)\cap K=(Z,0)$. Hence the exactness of the sequences in the lemma follows
form that of the sequences
$$0\ra I\overset{\tau_I}{\ra} A\ltimes_d I\overset{\pi_A}{\ra} A\ra 0,\quad
0\ra A\overset{\tau_A}{\ra} A\ltimes_d I\overset{\pi_I}{\ra} I\ra 0.$$
\end{proof}

By \leref{lem2.3}, we have  an algebra isomorphism $K/(0,L)\cong B$
and a vector space isomorphism $K/(Z,0)\cong J$.
Note that $\pi_I$ is generally not an algebra homomorphism unless $AI=IA=0$, i.e., the module actions are zero.

\begin{proposition}\prlabel{prop2.4}
Assume that $K$ is an ideal of $A\ltimes_d I$. Let $Z\subseteq B\subseteq A$ and $L\subseteq J\subseteq I$
be subspaces constructed from $K$ as before.
Then $(Z,L)$ is an ideal of $K$ and
$K/(Z,L)\cong B/Z \cong J/L$
as algebras.
\end{proposition}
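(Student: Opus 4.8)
The plan is to realise $(Z,L)$ as the kernel of a single surjective algebra homomorphism out of $K$, so that both assertions of the proposition fall out at once from the first isomorphism theorem for (non-unital) algebras, after which the remaining isomorphism $B/Z\cong J/L$ is simply quoted from the preceding results.

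First I would introduce the composite
$$\psi: K\xrightarrow{\ \pi_A\ } B\xrightarrow{\ q\ } B/Z,$$
where $q$ is the canonical projection. By \leref{lem2.3}, $\pi_A$ is an algebra homomorphism with $\pi_A(K)=B$; since $K$ is an ideal of $A\ltimes_d I$ it is in particular a subalgebra, so $\pi_A|_K$ is a surjective algebra homomorphism onto $B$. By \prref{prop2.1}(a), $Z$ and $B$ are ideals of $A$, hence $B$ is a subalgebra and $Z$ is an ideal of $B$, so $B/Z$ is a well-defined algebra and $q$ is an algebra epimorphism. Thus $\psi$ is a surjective algebra homomorphism, and $\psi(a,-x)=a+Z$ for every $(a,-x)\in K$.

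Next I would compute $\Ker(\psi)$. For $(a,-x)\in K$ one has $\psi(a,-x)=a+Z$, so $(a,-x)\in\Ker(\psi)$ precisely when $a\in Z$. The key point is that $a\in Z$ forces $x\in L$: since $(a,-x)\in K$ gives $\varphi(x)=a+Z=0+Z$ and $\Ker(\varphi)=L$, we get $x\in L$. Conversely, any $(z,-w)$ with $z\in Z$ and $w\in L$ satisfies $\varphi(w)=0+Z=z+Z$, hence lies in $K$ and in $\Ker(\psi)$. Therefore $\Ker(\psi)=(Z,L)$, and the first isomorphism theorem yields simultaneously that $(Z,L)$ is an ideal of $K$ and that $K/(Z,L)\cong B/Z$ as algebras. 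Finally, the isomorphism $B/Z\cong J/L$ is already in hand: by \coref{coro2.2} the induced map $\overline{\varphi}:J/L\ra B/Z$ is an algebra isomorphism. Composing gives $K/(Z,L)\cong B/Z\cong J/L$ as algebras, as claimed.

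The step I expect to carry the real content is the kernel identification $\Ker(\psi)=(Z,L)$, specifically the implication $a\in Z\Rightarrow x\in L$, which rests entirely on the well-definedness of $\varphi$ and on $\Ker(\varphi)=L$ recorded in the discussion preceding \prref{prop2.1}. A purely computational alternative, bypassing $\psi$, would instead verify directly that $(a,-x)(z,-w)$ and $(z,-w)(a,-x)$ lie in $(Z,L)$ for $(a,-x)\in K$ and $(z,-w)\in(Z,L)$; this route works but is messier, as it requires invoking the compatibility clause of \prref{prop2.1}(c') twice — once for the pair $(a,x)$ and once for the pair $(z,0)$, the latter producing $zy,\,yz\in L$ for all $y\in I$ — so I would present the homomorphism argument as the main proof.
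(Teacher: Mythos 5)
Your proof is correct. It reaches the same isomorphism $K/(Z,L)\cong B/Z$ by essentially the same device as the paper --- the projection $\pi_A$ from \leref{lem2.3} followed by the quotient map $B\ra B/Z$ --- but it handles the claim that $(Z,L)$ is an ideal of $K$ differently. The paper verifies this by a direct computation: it takes $z\in Z$, $x\in L$, $(a,-y)\in K$, invokes \coref{coro2.2}(a) and (c) to get $az,za\in Z$ and $ax-yx,\,xa-xy\in L$, observes that $(0,y)(z,0)=(0,yz)\in K$ and $(z,0)(0,y)=(0,zy)\in K$ force $zy,yz\in L$, and then multiplies out $(a,-y)(z,x)$ and $(z,x)(a,-y)$. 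You instead identify $(Z,L)$ as $\mathrm{Ker}\bigl(q\circ\pi_A|_K\bigr)$ --- the only substantive point being that $a\in Z$ forces $x\in L$ via $\varphi(x)=a+Z=0+Z$ and $\mathrm{Ker}(\varphi)=L$ --- so the ideal property and the isomorphism $K/(Z,L)\cong B/Z$ both drop out of the first isomorphism theorem; this is the ``purely computational alternative'' you correctly flag as avoidable. The final step, $B/Z\cong J/L$ from \coref{coro2.2}(c), is identical in both. Your route is shorter and makes the structural reason for the ideal property transparent; the paper's computation has the minor virtue of displaying exactly which clauses of \coref{coro2.2} are in play, but nothing is lost in your version.
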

\begin{proof}
Let $z\in Z$, $x\in L$ and $(a,-y)\in K$. Then by \coref{coro2.2}(a) and (c),
we have $az, za\in Z$ and  $ax-yx, xa-xy\in L$.
Since $(z,0)\in K$, $(0,y)(z,0)=(0,yz)\in K$ and $(z,0)(0,y)=(0,zy)\in K$,
and so $zy, yz\in L$.
Hence $(a,-y)(z,x)=(az,ax-yz-yx)\in(Z, L)$ and $(z,x)(a,-y)=(za, xa-zy-xy)\in(Z, L)$.
This shows that $(Z,L)$ is an ideal of $K$.

By \leref{lem2.3},  $\pi_A: K\ra B$ is an algebra epimorphism with ${\rm Ker}(\pi_A: K\ra B)=(0,L)$.
Since  $(Z,L)\supseteq(0,L)$ and $\pi_A(Z,L)=Z$,
$\pi_A$ induces an algebra isomorphism
$$\overline{\pi_A}: K/(Z,L)\ra B/Z,
(a,-y)+(Z,L)\mapsto a+Z.$$
By \coref{coro2.2}(c), $B/Z \cong J/L$ as algebras.
\end{proof}

\section{The subcoalgebras of coalgebra Dorroh extensions}\selabel{sec3}

Throughout this section, let $(C, P)$ be a Dorroh pair of coalgebras. Then $C\ltimes_d P$ is a coalgebra Dorroh extension of $C$ by $P$ as stated in \seref{sec1}. we shall study the subcoalgebras of $C\ltimes_d P$.

Let $T$ be a subspace of $C\ltimes_d P$. Put
\begin{eqnarray*}
D&:=&\{c\in C | (c,p)\in T \text{ for some } p\in P\},\\
Q&:=&\{p\in P | (c,p)\in T \text{ for some } c\in C\},\\
E&:=&\{c\in C | (c,0)\in T\},\\
R&:=&\{p\in P | (0,p)\in T\}.
\end{eqnarray*}
Then $D$ and $E$ are subspaces of $C$ and $E\ss D$, and $Q$ and $R$ are subspaces of $P$ and $R\ss Q$.
Define a linear map  $\eta: D\ra P/R $ by $\eta(c)=p+R$ if $(c,p)\in T$.
Clearly, $\eta$ is a well-defined linear map, $\mathrm{Im}(\eta)=Q/R$ and
$\mathrm{Ker}(\eta)=E$. Thus, $\eta$ induces a linear
isomorphism $\overline{\eta}: D/E\ra Q/R$ given by $\overline{\eta}(c+E)=p+R$ when $(c,p)\in T$.
In  this case, the subspace $T$ can be described as follows:
$$\begin{array}{rl}
T=&\{(c,p)\in C\ltimes_d P|c\in D, p\in Q, \eta(c)=p+R\}\\
=&\{(c,p)\in C\ltimes_d P|c\in D, p\in Q, \overline{\eta}(c+E)=p+R\}.\\
\end{array}$$

Let $\tau_C: C\ra C\ltimes_d P$ and
$\tau_P: P\ra C\ltimes_d P$ be the canonical inclusions,
and let $\pi_C: C\ltimes_d P\ra C$ and
$\pi_P: C\ltimes_d P\ra P$ be the canonical projections as before.

In the rest of this section, unless otherwise stated, we fix the above notations.

\begin{lemma}\lelabel{lem3.1}
$\tau_C$, $\pi_C$ and $\pi_P$ are coalgebra homomorphisms, and the sequences
$$0\ra R\overset{\tau_P}{\ra} T\overset{\pi_C}{\ra} D\ra 0,\quad
0\ra E\overset{\tau_C}{\ra} T\overset{\pi_P}{\ra} Q\ra 0$$
are exact.
\end{lemma}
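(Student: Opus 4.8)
The plan is to treat this lemma as the exact coalgebra-side dual of \leref{lem2.3}, with products and coproducts interchanged. Two things must be established: that $\tau_C$, $\pi_C$, $\pi_P$ intertwine the comultiplications, and that the two three-term sequences are exact. Both follow from direct inspection of the explicit coproduct formula
\[
\D(c,p)=\sum (c_1,0)\ot (c_2,0)+\sum (p_{(-1)},0)\ot (0,p_{(0)})+\sum (0,p_{(0)})\ot (p_{(1)},0)+\sum (0,p_1)\ot (0,p_2)
\]
together with the definitions of $D$, $Q$, $E$, $R$, so no new structural input beyond the coproduct of $C\ltimes_d P$ is needed.

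For the coalgebra-homomorphism claim I would check each map against $\D$. Applying $\D$ to $\tau_C(c)=(c,0)$ annihilates the three summands carrying a $P$-coaction, leaving $\sum(c_1,0)\ot(c_2,0)=(\tau_C\ot\tau_C)\D_C(c)$, so $\tau_C$ is a coalgebra map. For $\pi_C$ I would apply $\pi_C\ot\pi_C$ to the displayed formula: the last three summands each carry a tensor factor lying in $(0,P)$, hence vanish under $\pi_C$, and only $\sum c_1\ot c_2=\D_C(\pi_C(c,p))$ survives. Dually, $\pi_P\ot\pi_P$ kills the first three summands, each of which carries a factor in $(C,0)$, and returns $\sum p_1\ot p_2=\D_P(\pi_P(c,p))$. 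I would also note, as the dual of the remark following \leref{lem2.3} that $\pi_I$ fails to be an algebra map, that $\tau_P$ is in general \emph{not} a coalgebra homomorphism, precisely because the mixed coaction terms do not vanish.

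For exactness I would record, straight from the definitions, that $\pi_C(T)=D$ and $\pi_P(T)=Q$, while $\tau_P(R)=(0,R)=(0,P)\cap T$ and $\tau_C(E)=(E,0)=(C,0)\cap T$. These identities say exactly that $\tau_P(R)=\Ker(\pi_C|_T)$ and $\tau_C(E)=\Ker(\pi_P|_T)$; since $\tau_C$ and $\tau_P$ are injective and the two projections restrict onto $D$ and $Q$, the sequences are exact at every spot. Equivalently, exactness is inherited by intersecting $T$ with the global split-exact sequences
\[
0\ra P\overset{\tau_P}{\ra} C\ltimes_d P\overset{\pi_C}{\ra} C\ra 0,\qquad
0\ra C\overset{\tau_C}{\ra} C\ltimes_d P\overset{\pi_P}{\ra} P\ra 0,
\]
exactly as in the proof of \leref{lem2.3}.

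Since every step is a finite unwinding of the coproduct formula and the set-theoretic definitions of $D,Q,E,R$, I do not anticipate a genuine obstacle. The only points demanding care are the bookkeeping of which of the four summands survive under each projection, and recording the asymmetry that $\tau_P$, unlike $\tau_C$, need not be a coalgebra map.
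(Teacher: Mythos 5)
Your proposal is correct and follows essentially the same route as the paper: the paper declares the homomorphism claims "straightforward to check" (which your summand-by-summand computation carries out) and derives exactness from the identities $\pi_C(T)=D$, $\pi_P(T)=Q$, $(0,P)\cap T=(0,R)$, $(C,0)\cap T=(E,0)$ together with the global split-exact sequences, exactly as you do by mirroring the proof of \leref{lem2.3}.
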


\begin{proof}
It is straightforward to check that $\tau_C$, $\pi_C$ and $\pi_P$ are coalgebra homomorphisms.
The exactness of the two sequences follows from an argument similar to the proof of \leref{lem2.3}.
\end{proof}

Hence $T/(0,R)\cong D$, $T/(E,0)\cong Q$ as vector spaces.
Note that $\tau_P$ is generally not a coalgebra homomorphism.

Since $\pi_C$ is a coalgebra homomorphism, $C\ltimes_d P$ becomes a $C$-bicomodule
with the comodule structure maps given by $\rho_l=(\pi_C\ot 1)\D$ and $\rho_r=(1\ot\pi_C)\D$, respectively. That is,
$\rho_l(c,p)=(\pi_C\ot 1)\D (c,p)=\sum c_1\ot (c_2,0)+\sum p_{(-1)}\ot (0, p_{(0)})$
and $\rho_r(c,p)=( 1\ot\pi_C)\D (c,p)=\sum (c_1,0)\ot c_2+\sum (0,p_{(0)})\ot p_{(1)}$, where $(c, p)\in C\ltimes_dP$.
In this case, $\pi_P: C\ltimes_dP\ra P$ is a $C$-bicomodule homomorphism.
Obviously, $C\ltimes_d P$ is a $C$-Coalgebra.

Summarizing the discussion above, one gets the following lemma.

\begin{lemma}\lelabel{lem3.2}
$C\ltimes_d P$ is a $C$-Coalgebra with the comodule structure maps $\rho_l=(\pi_C\ot 1)\D$ and $\rho_r=(1\ot \pi_C)\D$. Moreover, $\pi_P: C\ltimes_d P\ra P$ is a $C$-Coalgebra homomorphism.
\end{lemma}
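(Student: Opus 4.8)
The plan is to verify the two assertions in turn, reducing each to facts already established in \leref{lem3.1} together with the explicit formula for $\D$ on $C\ltimes_d P$ recorded in \seref{sec1}. First I would confirm that $C\ltimes_d P$ is a $C$-bicomodule under $\rho_l=(\pi_C\ot 1)\D$ and $\rho_r=(1\ot\pi_C)\D$. Since $\pi_C$ is a coalgebra homomorphism by \leref{lem3.1} and $\D$ is coassociative, the standard cotensor/corestriction argument applies: the left coaction axiom $(\D_C\ot 1)\rho_l=(1\ot\rho_l)\rho_l$ follows from coassociativity of $\D$ together with $(\pi_C\ot\pi_C)\D=\D_C\,\pi_C$, and the counit axiom from $\ep_C\,\pi_C=\ep_{C\ltimes_d P}$; the right coaction is symmetric. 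The compatibility of the two coactions (the bicomodule condition $(1\ot\rho_r)\rho_l=(\rho_l\ot 1)\rho_r$) is again immediate from coassociativity of the single comultiplication $\D$, since both sides equal $(\pi_C\ot 1\ot\pi_C)(\D\ot 1)\D=(\pi_C\ot 1\ot\pi_C)(1\ot\D)\D$.

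Next I would check the $C$-Coalgebra condition, i.e.\ that $\D:C\ltimes_d P\ra (C\ltimes_d P)\ot(C\ltimes_d P)$ is a morphism of $C$-bicomodules, where the target carries the tensor-product bicomodule structure. Concretely, on the left this is the equality $(\pi_C\ot 1\ot 1)(\D\ot 1)\D=(1\ot\D)\rho_l$, which once more is a direct consequence of coassociativity of $\D$ after inserting $\pi_C$ in the appropriate leg; the right-hand condition is dual. I expect this to be almost purely formal: every required identity is a rewriting of coassociativity of the honest comultiplication $\D$ with a single $\pi_C$ applied somewhere, so no use of the Dorroh-pair compatibility axioms is needed beyond what already makes $\D$ coassociative.

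Finally I would verify that $\pi_P:C\ltimes_d P\ra P$ is a $C$-Coalgebra homomorphism, which is the only place a genuine computation is warranted. Here $P$ carries its intrinsic $C$-bicomodule structure $(\rho_l^P,\rho_r^P)$ coming from the Dorroh pair of coalgebras, and I must show $\pi_P$ intertwines both coactions and commutes with the comultiplications up to the $C$-Coalgebra structure. I would read off $\rho_l(c,p)=\sum c_1\ot(c_2,0)+\sum p_{(-1)}\ot(0,p_{(0)})$ and apply $1\ot\pi_P$ to get $\sum p_{(-1)}\ot p_{(0)}=\rho_l^P(\pi_P(c,p))$, using $\pi_P(c_2,0)=0$; the right coaction is symmetric, and the comultiplication compatibility uses the three Dorroh-pair coalgebra axioms displayed in \seref{sec1} to match the ``$(0,p_{(0)})$'' and ``$(0,p_1)\ot(0,p_2)$'' terms of $\D$ after projecting with $\pi_P\ot\pi_P$. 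The main obstacle, such as it is, is purely bookkeeping: one must track which of the four summands in $\D(c,p)$ survive each projection $1\ot\pi_P$, $\pi_P\ot 1$, or $\pi_P\ot\pi_P$, and confirm that the surviving terms reassemble exactly into $P$'s own structure maps; the cross-terms involving $C$ vanish under the projections, so no cancellation subtlety arises.
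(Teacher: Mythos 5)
Your proposal is correct and follows essentially the same route as the paper: the paper obtains the $C$-bicomodule structure by corestriction along the coalgebra map $\pi_C$ from \leref{lem3.1}, writes out $\rho_l$ and $\rho_r$ explicitly, and observes that $\pi_P$ intertwines everything because only the $(0,p_1)\ot(0,p_2)$ and $p_{(\pm1)}$ terms of $\D$ survive the relevant projections. The only quibble is your remark that the three Dorroh-pair coalgebra axioms are needed to match terms under $\pi_P\ot\pi_P$; in fact those axioms are only used (in \cite{YC}) to make $\D$ coassociative in the first place, and the compatibility of $\pi_P$ with the comultiplications is immediate from the vanishing of the cross-terms, as you yourself note at the end.
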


Suppose that $R$ is a $C$-Coideal of $P$. Then $P/R$ becomes a $C$-Coalgebra and $(C, P/R)$ is also a Dorroh pair of coalgebras. Moreover, the canonical
epimorphism $\pi: P\ra P/R$ is a $C$-Coalgebra homomorphism, which induces a
coalgebra epimorphism $(1,\pi): C\ltimes_dP\ra C\ltimes_d(P/R)$,
$(c,p)\mapsto(c,\pi(p))$. Clearly, ${\rm Ker}(1,\pi)=(0,R)$, a coideal
of $C\ltimes_dP$. Hence $(1,\pi)$ induces a coalgebra isomorphism
$(C\ltimes_dP)/(0,R)\ra C\ltimes_d(P/R)$,
$(c,p)+(0,R)\mapsto(c,\pi(p))=(c, p+R)$. We identify $(C\ltimes_dP)/(0,R)$ with $C\ltimes_d(P/R)$ via the isomorphism. In this case, the map
$(1,\pi): C\ltimes_dP\ra C\ltimes_d(P/R)$ is exactly the canonical epimorphism
$C\ltimes_dP\ra(C\ltimes_dP)/(0,R)$, denoted simply by $\pi$.
Moreover, for any $(c,p)\in T$, $\pi(c, p)=(c, p+R)=(c, \eta(c))$.
Let $\pi_C: C\ltimes_d(P/R)\ra C$ and $\pi_{P/R}: C\ltimes_d(P/R)\ra P/R$
denote the corresponding projections, respectively.

\begin{proposition}\prlabel{prop3.3}
$T$ is a subcoalgebra of $C\ltimes_dP$ if and only if the following hold:
\begin{enumerate}
\item[(a)] $D$ is a subcoalgebra of $C$.
\item[(b)] $Q$ is a $C$-subCoalgebra of $P$ and $R$ a $C$-Coideal of $Q$, and $\rho^P_l(Q)\subseteq D\ot Q$
and $\rho^P_r(Q)\subseteq Q\ot D$, where $\rho^P_l$ and $\rho^P_r$ are the $C$-comodule structure maps of $P$.
Hence $Q$ and $Q/R$ are both $D$-Coalgebras.
\item[(c)] $\eta: D\ra Q/R$ is a $C$-bicomodule (or $D$-bicomodule) homomorphism, and
$$\begin{array}{c}
\sum\eta(p_{(-1)})\ot p_{(0)} =\sum\pi(p_1)\ot p_2,\
\sum p_{(0)}\ot\eta(p_{(1)})=\sum p_1\ot\pi(p_2),\ \forall p\in Q.
\end{array}$$
\end{enumerate}
If this is the case, then $\eta$ is a coalgebra homomorphism, and hence it is
a $C$-Coalgebra (or $D$-Coalgebra) homomorphism.
\end{proposition}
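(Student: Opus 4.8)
The plan is to mimic the pattern already established for the algebra case in \prref{prop2.1} and \coref{coro2.2}, but now carried out entirely in the dual (coalgebra) language. The subspace $T$ is completely recorded by the data $(D, E, Q, R, \eta)$, where $\eta\colon D\to P/R$ is the linear map with $\eta(c)=p+R$ whenever $(c,p)\in T$; equivalently $T=\{(c,p)\mid c\in D,\ p\in Q,\ \eta(c)=p+R\}$. So I would prove that $T$ being a subcoalgebra of $C\ltimes_d P$ is equivalent to the closure of this data under the comultiplication, and then translate that single closure condition into the three bookkeeping conditions (a)--(c). The fundamental tool is that $T$ is a subcoalgebra of $C\ltimes_d P$ if and only if $\D(T)\subseteq T\ot T$, and I would feed elements of $T$ through the explicit four-term formula for $\D$ on $H\ltimes_d I$ given in \seref{sec1}.

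**Key steps in order.**
First I would establish the ``only if'' direction. Assuming $T$ is a subcoalgebra, apply the four coalgebra homomorphisms $\pi_C$ and $\pi_P$ (which are coalgebra maps by \leref{lem3.1}) in pairs to $\D(T)\subseteq T\ot T$. Applying $\pi_C\ot\pi_C$ forces $\D_C(D)\subseteq D\ot D$, giving (a). Applying $\pi_P\ot\pi_P$, together with the $C$-bicomodule structure supplied by \leref{lem3.2} (where $\rho_l=(\pi_C\ot 1)\D$ and $\rho_r=(1\ot\pi_C)\D$), forces $Q$ to be a $C$-subcoalgebra with $\rho^P_l(Q)\subseteq D\ot Q$ and $\rho^P_r(Q)\subseteq Q\ot D$, and forces $R$ to be a $C$-coideal of $Q$; this is (b). The mixed projections $\pi_C\ot\pi_P$ and $\pi_P\ot\pi_C$, applied after passing to the quotient $C\ltimes_d(P/R)\cong(C\ltimes_d P)/(0,R)$ via the map $\pi$ described just before the statement, yield the two cross-term identities in (c), namely $\sum\eta(p_{(-1)})\ot p_{(0)}=\sum\pi(p_1)\ot p_2$ and $\sum p_{(0)}\ot\eta(p_{(1)})=\sum p_1\ot\pi(p_2)$, and the bicomodule-homomorphism property of $\eta$. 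For the ``if'' direction I would reverse this: take a general $(c,p)\in T$, expand $\D(c,p)$ using the four-term formula, and verify term by term that each summand lands in $T\ot T$, using (a) to handle the pure $C$-part $\sum(c_1,0)\ot(c_2,0)$, using (b) to handle the pure $P$-part $\sum(0,p_1)\ot(0,p_2)$ modulo $R$, and using (c) precisely to match the two mixed comodule terms $\sum(p_{(-1)},0)\ot(0,p_{(0)})$ and $\sum(0,p_{(0)})\ot(p_{(1)},0)$ against the graph condition $\eta(c)=p+R$.

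**The main obstacle.**
The delicate point is the bookkeeping with $R$: because $\eta$ takes values in $P/R$ rather than $P$, every identity in (c) and every verification in the ``if'' direction holds only modulo $R$, so I must consistently work in the quotient coalgebra $C\ltimes_d(P/R)$ and exploit the identification $(C\ltimes_d P)/(0,R)\cong C\ltimes_d(P/R)$ and the fact that $\pi(c,p)=(c,\eta(c))$ for $(c,p)\in T$. In particular, verifying $\D(T)\subseteq T\ot T$ is cleanest if I first show $(\pi\ot\pi)\D(c,p)\in$ (graph of $\eta)\ot($graph of $\eta)$ and separately control the $R$-components; the risk is sign/placement errors in tracking which tensor slot the coideal $R$ sits in for the left versus the right comodule map. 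I expect the compatibility conditions (c) to be exactly what is needed to close this gap, so the real work is organizational rather than conceptual. The final sentence (that $\eta$ is then a coalgebra, hence $C$- or $D$-coalgebra, homomorphism) should follow immediately by pushing $\D(T)\subseteq T\ot T$ through $\pi_C\ot\pi_{P/R}$ and comparing with the induced comultiplication on $Q/R\cong D/E$ from \leref{lem3.1}.
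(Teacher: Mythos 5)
Your forward direction coincides with the paper's: apply the four projections $\pi_C\ot\pi_C$, $\pi_C\ot\pi_P$, $\pi_P\ot\pi_C$, $\pi_P\ot\pi_P$ to $\D(T)\ss T\ot T$ and read off (a), (b) and (c), using \leref{lem3.1} and \leref{lem3.2}. The trouble is in your converse. First, the plan to ``verify term by term that each summand lands in $T\ot T$'' cannot work literally: $(c_1,0)$ lies in $T$ only when $c_1\in E$, so the pure $C$-term $\sum(c_1,0)\ot(c_2,0)$ is typically not in $T\ot T$ by itself; only the combination of the four terms, matched through the graph condition $\eta(c)=p+R$, lands there. Second, your proposed repair --- show that $(\pi\ot\pi)\D(c,p)$ lies in the image of $(\text{graph of }\eta)\ot(\text{graph of }\eta)$ and then ``separately control the $R$-components'' --- does not suffice as stated, because $\Ker(\pi\ot\pi)=(0,R)\ot(C\ltimes_dP)+(C\ltimes_dP)\ot(0,R)$ is not contained in $T\ot T$, so you would only conclude $\D(c,p)\in T\ot T+\Ker(\pi\ot\pi)$, which is strictly weaker than what is needed.

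The paper closes exactly this gap by working one tensor factor at a time. Using (a)--(c) it computes
$(\pi\ot 1)\D(c,p)=\sum(c_1,\eta(c_1))\ot(c_2,0)+\sum(p_{(-1)},\eta(p_{(-1)}))\ot(0,p_{(0)})$,
which exhibits $(\pi\ot 1)\D(c,p)\in(\pi\ot 1)(T\ot(C\ltimes_dP))$; since $\Ker(\pi\ot 1)=(0,R)\ot(C\ltimes_dP)\ss T\ot(C\ltimes_dP)$, this yields $\D(c,p)\in T\ot(C\ltimes_dP)$. The symmetric computation gives $\D(c,p)\in(C\ltimes_dP)\ot T$, and then $\D(c,p)\in(T\ot(C\ltimes_dP))\cap((C\ltimes_dP)\ot T)=T\ot T$. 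You should replace your $(\pi\ot\pi)$ step by this one-sided argument and intersection; with that modification the rest of your outline, including the final observation that $\eta$ is a coalgebra homomorphism (obtained in the paper by a direct computation $\sum\eta(c_1)\ot\eta(c_2)=\sum\eta(c)_1\ot\eta(c)_2$ chaining the identities in (c)), follows the paper's route.
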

\begin{proof}
Assume that $T$ is a subcoalgebra of $C\ltimes_dP$.
By \leref{lem3.1}, $D=\pi_C(T)$, $Q=\pi_P(T)$,
and $\pi_C$ and $\pi_P$ are coalgebra homomorphisms.
It follows that $D$ is a subcoalgebra of $C$ and $Q$ is a subcoalgebra of $P$.
By \leref{lem3.2}, $C\ltimes_dP$ is a $C$-Coalgebra and $\pi_P:  C\ltimes_dP\ra P$
is a $C$-Coalgebra homomorphism. Since $T$ is a subcoalgebra of $C\ltimes_dP$,
$T$ becomes a $C$-subCoalgebra of  $C\ltimes_dP$. Hence $Q=\pi_P(T)$
is a $C$-subCoalgebra of $P$, the restriction map $\pi_P: T\ra  Q$ is a $C$-Coalgebra homomorphism.

Since $T$ is a subcoalgebra and $(0,P)$ is a coideal of $C\ltimes_d P$,
$(0,R)=(0,P)\cap T$ is a coideal of $T$, and so $R=\pi_P(0,R)$ is a coideal of $Q=\pi_P(T)$.

Now we have $\rho_l(T)=(\pi_C\ot 1)\D(T)\ss (\pi_C\ot 1)(T\ot T)=D\ot T$, and similarly
$\rho_r(T)\ss T\ot D$. Since $\pi_P$ is a $C$-bicomodule homomorphism,
$\rho^P_l(Q)=\rho^P_l(\pi_P(T))=(1\ot\pi_P)(\rho_l(T))\ss(1\ot\pi_P)(D\ot T)=D\ot Q$,
and similarly $\rho^P_r(Q)\ss Q\ot D$.

Since $(0,R)$ is a coideal of $T$, $\D(0,R)\ss T\ot(0,R)+(0,R)\ot T$.
Hence $\rho_l(0,R)=(\pi_C\ot 1)\D(0,R)\ss
(\pi_C\ot 1)(T\ot(0,R)+(0,R)\ot T)=D\ot(0,R)$,
and similarly
$\rho_r(0,R)\ss(0,R)\ot D$. So
$(0,R)$  is a $C$-subbicomodule of $T$.
Since $\pi_P$ is a $C$-bicomodule homomorphism,
$R=\pi_P(0,R)$ is $C$-subbicomodules of $P$.
This proves (a) and (b).

Let $c\in D$ (resp., $p\in Q$). Then there exists some $p\in Q$ (resp., $c\in D$)
such that $(c,p)\in T$, and hence
$\eta(c)=p+R=\pi(p)$. Since $T$ is a subcoalgebra of $C\ltimes_dP$,
there exist elements $(c'_i, p'_i), (c''_i, p''_i)\in T$, $1\<i\<n$,
such that $\D(c,p)=\sum_{i=1}^n (c_i',p_i')\ot (c_i'',p_i'')$. Moreover, $\eta(c'_i)=\pi(p'_i)$
and $\eta(c''_i)=\pi(p''_i)$, $1\<i\<n$. On the other hand, we have
$\D(c, p)=\sum(c_1, 0)\ot(c_2, 0)+\sum(p_{(-1)}, 0)\ot(0, p_{(0)})+\sum(0, p_{(0)})\ot(p_{(1)}, 0)+\sum(0, p_1)\ot(0, p_2)$.
Hence
$$\begin{array}{rl}
\sum_{i=1}^n (c_i',p_i')\ot (c_i'',p_i'')=&\sum(c_1, 0)\ot(c_2, 0)+\sum(p_{(-1)}, 0)\ot(0, p_{(0)})\\
&+\sum(0, p_{(0)})\ot(p_{(1)}, 0)+\sum(0, p_1)\ot(0, p_2).\\
\end{array}$$
Applying $\pi_C\ot\pi_C$, $\pi_C\ot\pi_P$, $\pi_P\ot\pi_C$ and $\pi_P\ot\pi_P$ to the above equation respectively, one gets the following equations:
$$\begin{array}{ll}
\sum_{i=1}^n c_i'\ot c_i''=\sum c_1\ot c_2,
&\sum_{i=1}^n c_i'\ot p_i''=\sum p_{(-1)}\ot p_{(0)},\\
\sum_{i=1}^n p_i'\ot c_i''=\sum p_{(0)}\ot p_{(1)},
&\sum_{i=1}^n p_i'\ot p_i''=\sum p_1\ot p_2.\\
\end{array}$$
Hence $\sum \eta(c)_{(-1)}\ot\eta(c)_{(0)}=\sum \pi(p)_{(-1)}\ot\pi(p)_{(0)}=\sum p_{(-1)}\ot\pi(p_{(0)})
=\sum_{i=1}^n c_i'\ot \pi(p_i'')=\sum_{i=1}^n c_i'\ot\eta( c_i'')=\sum c_1\ot\eta(c_2)$.
Similarly, one can check that $\sum\eta(c)_{(0)}\ot\eta(c)_{(1)}=\sum \eta(c_1)\ot c_2$.
Therefore, $\eta$ is a $C$-bicomodule homomorphism.
We also have
$\sum\eta(p_{(-1)})\ot p_{(0)} =\sum_{i=1}^n\eta(c_i')\ot p_i'' =\sum_{i=1}^n\pi(p_i')\ot p_i''=\sum\pi(p_1)\ot p_2$,
and similarly $\sum p_{(0)}\ot\eta(p_{(1)})=\sum p_1\ot\pi(p_2)$. This shows (c).

Conversely, assume that (a), (b) and (c) are satisfied.
Let $(c,p)\in T$. Then $c\in D$, $p\in Q$ and $\eta(c)=p+R=\pi(p)\in Q/R$.
By (a) and (b), $\sum c_1\ot c_2\in D\ot D$, $\sum p_1\ot p_2\in Q\ot Q$,
$\sum p_{(-1)}\ot p_{(0)}\in D\ot Q$ and $\sum p_{(0)}\ot p_{(1)}\in Q\ot D$.
By (c), we have
$\sum c_1\ot \eta(c_2)=\sum p_{(-1)}\ot\pi(p_{(0)})$,
$\sum \eta(c_1)\ot c_2=\sum \pi(p_{(0)})\ot p_{(1)}$,
$\sum\eta(p_{(-1)})\ot p_{(0)} =\sum\pi(p_1)\ot p_2$
and $\sum p_{(0)}\ot\eta(p_{(1)})=\sum p_1\ot\pi(p_2)$.
Thus, we have
\begin{equation*}
\begin{split}
(\pi\ot 1)\D(c,p)&=(\pi\ot 1)(\sum (c_1,0)\ot (c_2,0)+\sum (p_{(-1)},0)\ot (0,p_{(0)})\\
& \quad+\sum (0,p_{(0)})\ot (p_{(1)},0)+ \sum (0,p_1)\ot (0,p_2))\\
& =\sum (c_1,\pi(0))\ot (c_2,0)
 +\sum (p_{(-1)},\pi(0))\ot (0,p_{(0)})\\
 & \quad +\sum (0,\pi(p_{(0)}))\ot(p_{(1)},0)+
 \sum (0,\pi(p_1))\ot (0,p_2)\\
 & =\sum (c_1,\pi(0))\ot (c_2,0)
 +\sum (p_{(-1)},\pi(0))\ot (0,p_{(0)})\\
 & \quad +\sum (0,\eta(c_1))\ot(c_2,0)+
 \sum (0,\eta(p_{(-1)}))\ot (0,p_{(0)})\\
 & =\sum (c_1,\eta(c_1))\ot (c_2,0)
 +\sum (p_{(-1)},\eta(p_{(-1)}))\ot (0,p_{(0)}).\\
\end{split}
\end{equation*}
This implies $(\pi\ot 1)\D(c,p)\in(\pi\ot 1)(T\ot(C\ltimes_dP))$.
Note that the kernel of $\pi\ot 1:  (C\ltimes_dP)\ot(C\ltimes_dP)\ra  (C\ltimes_dP)/(0,R)\ot(C\ltimes_dP)
= (C\ltimes_d(P/R))\ot(C\ltimes_dP)$ is $(0,R)\ot (C\ltimes_dP)$.
Since $(0,R)\ot (C\ltimes_dP)\subseteq T\ot(C\ltimes_dP)$, $\D(c,p)\in T\ot(C\ltimes_dP)$.
Similarly, one can show that $\D(c,p)\in (C\ltimes_dP)\ot T$. Hence
$\D(c,p)\in(T\ot(C\ltimes_dP))\cap((C\ltimes_dP)\ot T)=T\ot T$, and so
$T$ is a subcoalgebra of $C\ltimes_dP$.

Furthermore, we have $\sum\eta(c_1)\ot\eta(c_2)=(\eta\ot 1)(\sum c_1\ot\eta(c_2))
=(\eta\ot 1)(\sum p_{(-1)}\ot\pi(p_{(0)}))=(1\ot\pi)(\sum \eta(p_{(-1)})\ot p_{(0)})
=(1\ot\pi)(\sum \pi(p_1)\ot p_2)=\sum \pi(p_1)\ot \pi(p_2)=\sum \pi(p)_1\ot \pi(p)_2=\sum \eta(c)_1\ot \eta(c)_2$.
Hence $\eta$ is a coalgebra homomorphism.
\end{proof}

By \prref{prop3.3}, for any subspaces $D\ss C$ and $R\ss Q\ss P$, if there exists a linear surjection $\eta: D\ra Q/R$,
such that (a), (b) and (c) in \prref{prop3.3} are satisfied, then the following set is a subcoalgebra of $C\ltimes_d P$:
$$\{(c,p)\in C\ltimes_d P|c\in D, p\in Q, \eta(c)=p+R\}.$$
Conversely, any subcoalgebra of $C\ltimes_d P$ has this form.

If $T$ is a subcoalgebra of $C\ltimes_d P$, then ${\rm Ker}(\eta)=E$ is a subcoalgebra,
and hence a coideal  of $C$ (or $D$).
Thus, one gets the following corollary.

\begin{corollary}\colabel{coro3.4}
	$T$ is a subcoalgebra of $C\ltimes_dP$ if and only if the following hold:
	\begin{enumerate}
		\item[(a)] $D$ and $E$ are subcoalgebras of $C$.
		\item[(b)] $Q$ is a $C$-subCoalgebra of $P$ and $R$ a $C$-Coideal of $Q$, and $\rho^P_l(Q)\subseteq D\ot Q$
		and $\rho^P_r(Q)\subseteq Q\ot D$, where $\rho^P_l$ and $\rho^P_r$ are the $C$-comodule structure maps of $P$.
		Hence $Q$ and $Q/R$ are both $D$-Coalgebras.
		\item[(c)] $\overline{\eta}: D/E\ra Q/R$ is a $C$-bicomodule (or $D$-bicomodule) isomorphism, and
		$$\begin{array}{c}
		\sum\overline{\eta}(\overline{p_{(-1)}})\ot p_{(0)} =\sum\pi(p_1)\ot p_2,\
		\sum p_{(0)}\ot\overline{\eta}(\overline{p_{(1)}})=\sum p_1\ot\pi(p_2),\ \forall p\in Q,
		\end{array}$$
		where $\overline{x}$ denotes the image of $x\in D$ under the canonical projection $D\ra D/E$.
	\end{enumerate}
	If this is the case, then $\overline{\eta}$ is a coalgebra isomorphism, and hence it is
	a $C$-Coalgebra (or $D$-Coalgebra) isomorphism.
\end{corollary}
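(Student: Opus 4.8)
The plan is to derive \coref{coro3.4} from \prref{prop3.3} with essentially no new computation, the only extra inputs being the identity $\Ker(\eta)=E$ recorded in the setup and the first isomorphism theorem, which converts the homomorphism $\eta\colon D\ra Q/R$ into the induced map $\overline{\eta}\colon D/E\ra Q/R$. Since condition (b) is verbatim the same in the two statements, I would only need to match (a) with (a) and (c) with (c).

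For the forward implication I would assume $T$ is a subcoalgebra and quote \prref{prop3.3}: this already gives that $D$ is a subcoalgebra of $C$, that (b) holds, and that $\eta$ is simultaneously a coalgebra homomorphism and a $C$-bicomodule homomorphism onto $Q/R$ with kernel $E$. I would then apply the first isomorphism theorem in both the coalgebra and the $C$-bicomodule settings: the kernel $E$ is thereby a coideal and a $C$-subbicomodule of $D$, the quotient $D/E$ is a $C$-coalgebra, and the induced $\overline{\eta}\colon D/E\ra Q/R$ is at once a coalgebra isomorphism and a $C$-bicomodule isomorphism. The two compatibility identities of \coref{coro3.4}(c) would follow from those of \prref{prop3.3}(c) by the substitutions $\eta(p_{(-1)})=\overline{\eta}(\overline{p_{(-1)}})$ and $\eta(p_{(1)})=\overline{\eta}(\overline{p_{(1)}})$. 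The only assertion not already supplied by \prref{prop3.3} is that $E$ is a \emph{subcoalgebra} of $C$, which I treat separately below.

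For the converse I would assume (a), (b), (c) of \coref{coro3.4} and reconstruct the data of \prref{prop3.3}. Writing $\pi_E\colon D\ra D/E$ for the canonical projection, I would set $\eta=\overline{\eta}\circ\pi_E$. The well-posedness of condition (c) presupposes that $D/E$ is both a coalgebra and a $C$-bicomodule, i.e.\ that $E$ is a coideal and a $C$-subbicomodule of $D$; hence $\pi_E$ is a coalgebra and $C$-bicomodule homomorphism, so $\eta$ is a $C$-bicomodule homomorphism, and its two compatibility equations drop out of those for $\overline{\eta}$ under the same substitution. \prref{prop3.3}(a)--(c) then hold, and that proposition forces $T$ to be a subcoalgebra. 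The closing claim that $\overline{\eta}$ is a coalgebra, hence a $C$-coalgebra, isomorphism is exactly the isomorphism produced in the forward step.

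The one step I expect to be genuinely delicate is showing $E$ is a subcoalgebra of $C$, because intersections of subcoalgebras are not subcoalgebras in general. To handle it I would exploit the block form of the comultiplication: for $c\in E$ one has $(c,0)\in(C,0)\cap T=(E,0)$ and $\D(c,0)=\sum(c_1,0)\ot(c_2,0)$, so $\D(c,0)$ lies in both $T\ot T$ (as $T$ is a subcoalgebra) and $(C,0)\ot(C,0)$. Invoking the elementary identity $(U_1\ot W_1)\cap(U_2\ot W_2)=(U_1\cap U_2)\ot(W_1\cap W_2)$ for subspaces, with each $U_i=W_i$ taken to be $T$ and $(C,0)$, I would conclude $\D(c,0)\in(T\ot T)\cap\big((C,0)\ot(C,0)\big)=(E,0)\ot(E,0)$, so $(E,0)$ and therefore $E$ is a subcoalgebra. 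Everything else---the well-definedness of $\overline{\eta}$ and the transport of the bicomodule and compatibility data through $\pi_E$---is routine once $\Ker(\eta)=E$ is used.
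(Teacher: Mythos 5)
Your proposal is correct and follows the paper's own route: the paper's proof of this corollary is literally ``It follows from \prref{prop3.3} and $\Ker(\eta)=E$,'' with the subcoalgebra property of $E$ asserted in the paragraph preceding the statement, and you have simply filled in those details (first isomorphism theorem for coalgebras and bicomodules, plus the intersection argument for $E$). One small correction: over a field the intersection of two subcoalgebras is \emph{always} a subcoalgebra --- precisely by the tensor-intersection identity $(U_1\ot W_1)\cap(U_2\ot W_2)=(U_1\cap U_2)\ot(W_1\cap W_2)$ that you invoke --- so the step you flag as ``genuinely delicate'' is the standard fact rather than an exception to it.
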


\begin{proof}
It follows from \prref{prop3.3} and ${\rm Ker}(\eta)=E$.
\end{proof}	

\begin{corollary}\colabel{coro3.5}
If $T$ is a subcoalgebra of $C\ltimes_dP$, then $(E,R)$ is a coideal of $T$ and
$$T/(E,R)\cong D/E \cong Q/R$$
as coalgebras.
\end{corollary}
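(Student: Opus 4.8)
The plan is to deduce the corollary directly from the preceding machinery, mirroring the strategy used for the algebra side in \prref{prop2.4}. Since $T$ is a subcoalgebra of $C\ltimes_dP$, \coref{coro3.4} applies, giving us that $E$ is a subcoalgebra (hence a coideal) of $D$, that $R$ is a $C$-coideal of $Q$, and that $\overline{\eta}:D/E\ra Q/R$ is a coalgebra isomorphism. The first task is to show that $(E,R)$ is a coideal of $T$. I would verify this by computing $\D(c,p)$ for a general element $(c,p)\in(E,R)$ and checking that it lands in $T\ot(E,R)+(E,R)\ot T$. The cleanest route is to observe from \leref{lem3.1} that $(0,R)=(0,P)\cap T$ is already a coideal of $T$ and that $(E,0)=\pi_C$-related data behaves well, then combine these; alternatively, since $\pi_C:T\ra D$ and $\pi_P:T\ra Q$ are coalgebra homomorphisms by \leref{lem3.1}, the subspace $(E,R)$ is exactly the intersection of the two coideals $\Ker(\pi_C:T\ra D/E\text{-lift})$ and $(0,R)$, so one checks the coideal inclusion by splitting $\D$ according to the four-term formula and tracking each summand.

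Once $(E,R)$ is known to be a coideal, the quotient coalgebra $T/(E,R)$ is well defined, and the isomorphisms follow from the exact sequences of \leref{lem3.1}. Concretely, $\pi_C:T\ra D$ is a coalgebra epimorphism with kernel $(0,R)$ by \leref{lem3.1}; since $(E,R)\supseteq(0,R)$ and $\pi_C(E,R)=E$, the map $\pi_C$ descends to a coalgebra isomorphism
$$\overline{\pi_C}:T/(E,R)\ra D/E,\quad (c,p)+(E,R)\mapsto c+E.$$
This is the coalgebra analogue of the algebra isomorphism $\overline{\pi_A}$ constructed in the proof of \prref{prop2.4}. Dualizing the roles, $\pi_P:T\ra Q$ is a coalgebra homomorphism (again \leref{lem3.1}) with $\pi_P(E,R)=R$, so it induces a coalgebra map $T/(E,R)\ra Q/R$; that this is an isomorphism is precisely the content that $\overline{\eta}:D/E\ra Q/R$ from \coref{coro3.4}(c) is a coalgebra isomorphism, composed with $\overline{\pi_C}$.

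The main obstacle I anticipate is the bookkeeping in verifying the coideal condition for $(E,R)$, because $\tau_P$ is \emph{not} a coalgebra homomorphism (as noted after \leref{lem3.1}), so $\D$ restricted to the $P$-component does not split cleanly. The four cross-terms in the formula for $\D(c,p)$ — the two mixed terms $\sum(p_{(-1)},0)\ot(0,p_{(0)})$ and $\sum(0,p_{(0)})\ot(p_{(1)},0)$ — must each be shown to lie in the appropriate part of $T\ot(E,R)+(E,R)\ot T$, and this requires invoking the compatibility relations $\rho^P_l(R)\ss D\ot R$ and $\rho^P_r(R)\ss R\ot D$ together with the equations in \coref{coro3.4}(c). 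Establishing that $R$ is a $C$-subbicomodule (which was shown inside the proof of \prref{prop3.3}) is exactly what makes these cross-terms behave, so I would lean on that computation rather than redoing it. Everything else is a formal consequence of \leref{lem3.1} and \coref{coro3.4}, so the proof can be kept short by citing those results.
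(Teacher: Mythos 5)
Your overall architecture is sound and the second half of your argument is essentially the paper's: the isomorphism $T/(E,R)\cong D/E$ is obtained from $\pi_C$ followed by the projection $D\ra D/E$, and $D/E\cong Q/R$ is quoted from \coref{coro3.4}. But the paper handles the crux --- that $(E,R)$ is a coideal of $T$ --- by a device you miss, and your substitute plan is shakier than you acknowledge. The paper forms the composite $\theta: T\xrightarrow{\pi_C}D\ra D/E$, which is a coalgebra epimorphism by \leref{lem3.1} and \coref{coro3.4}, and observes that for $(c,p)\in T$ one has $\eta(c)=p+R$ with $\Ker(\eta)=E$, so $c\in E$ if and only if $p\in R$; hence $\Ker(\theta)=(E,R)$. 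The coideal property of $(E,R)$ and the isomorphism $T/(E,R)\cong D/E$ then fall out simultaneously, with no computation of $\D$ at all. The equivalence $c\in E\Leftrightarrow p\in R$ is also exactly what you need (and leave implicit) to see that your descended map $\overline{\pi_C}$ is injective, since a priori $\pi_C^{-1}(E)\cap T$ could be larger than $(E,R)$.

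Two concrete problems with your plan for the coideal step. First, the term-by-term tracking of the four summands of $\D(c,p)$ does not work summand by summand: for $p\in R$ the piece $\sum(0,p_1)\ot(0,p_2)$ lies in $(0,R)\ot(0,Q)+(0,Q)\ot(0,R)$, and $(0,Q)\not\ss T$ in general (an element $(0,q)$ lies in $T$ only when $q\in R$), so this summand is not individually in $T\ot(E,R)+(E,R)\ot T$; one must recombine the summands using the relations of \coref{coro3.4}(c), which amounts to redoing the kernel argument by hand with representatives. Second, your aside that $(E,R)$ is ``the intersection of the two coideals $\Ker(\theta)$ and $(0,R)$'' is off on both counts: that intersection equals $(0,R)$, not $(E,R)$ (in fact $\Ker(\theta)$ already \emph{is} $(E,R)$), and an intersection of coideals need not be a coideal anyway. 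None of this is unfixable, but as written the decisive step of the corollary is the one you have not actually secured.
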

\begin{proof}
Suppose that $T$ is a subcoalgebra of $C\ltimes_dP$.
By \coref{coro3.4}, $E$ and $D$ are subcoalgebras of $C$, $Q$ is a subcoalgebra of $P$
and $R$ is a coideal of $Q$. Moreover, $D/E \cong Q/R$ as coalgebras.
Hence the canonical
projection $D\ra  D/E$, $d\mapsto d+E$ is a coalgebra epimorphism. Then by \leref{lem3.1}, the composition
$\theta: T\xrightarrow{\pi_C}D\ra D/E$, $(c, p)\mapsto c+E$ is a coalgebra epimorphism.
Let $(c,p)\in T$. Then $c\in D$, $p\in Q$, $\eta(c)=p+R\in Q/R$ and $\theta(c,p)=c+E\in D/E$.
Since ${\rm Ker}(\eta)=E$, $(c,p)\in{\rm Ker}(\theta)$ iff $c\in E$  iff $\eta(c)=0$ iff $p\in R$.
Hence ${\rm Ker}(\theta)=\{(c,p)|c\in E, p\in R\}=(E,R)$.
It follows that $(E,R)$ is a coideal of $T$ and $T/(E,R)\cong D/E$
as coalgebras.
\end{proof}

\section{some applications}\selabel{sec4}
\subsection{The ideals of $k\ltimes_d I$}
Let $I$ be  an algebra without identity. Then we can
construct an algebra Dorroh extension $k\ltimes_d I$, which is a $k$-algebra with the identity $(1,0)$.

Let $K$ be an ideal of $k\ltimes_d I$. Then $K=\{(\a,-x)\in k\ltimes_d I|\a\in B, x\in J , \varphi(x)=\a+Z\}$ by \prref{prop2.1}, where $Z\ss B\ss k$ and $L\ss J\ss I$ are subspaces, and $\varphi: J\ra B/Z$ is a linear surjection with ${\rm Ker}(\varphi)=L$ such that (a), (b) and (c) in \prref{prop2.1} are satisfied.
Clearly, $B=Z=0$, or $B=Z=k$, or $B=k$ and $Z=0$.

If $B=Z=0$, then $J=L$ is an ideal of $I$ and $K=(0, L)=(0, J)$.
If $B=Z=k$, then $\varphi(0)=1+Z$. Hence $L=I$ by \prref{prop2.1}(c),
and so $J=I$. Thus, $K=k\ltimes_d I$. If $B=k$ and $Z=0$, then $J$ is an subalgebra of $I$, $\varphi: J\ra k$ is an algebra epimorphism such that
$\varphi(x)=\a$ implies $\a y-xy, \a y-yx\in{\rm Ker}(\varphi)$ for any $y\in I$.
Moreover, $K=\{(\a, -x)| \a\in k, x\in J, \varphi(x)=\a\}$.

Summarizing the above discussion, we have the following proposition.

\begin{proposition}\prlabel{prop4.1}
Let $K$ be a subspace of $k\ltimes_d I$. Then $K$ is an ideal of $k\ltimes_d I$
if and only if one of the following hold.
\begin{enumerate}
\item[(a)] $K=(0,L)$, where $L$ is an ideal of $I$.
\item[(b)] $K=k\ltimes_d I$.
\item[(c)] $K=\{(\a, -x)|\a\in k, x\in J, \varphi(x)=\a\}$,
where $J$ is an subalgebra of $I$, $\varphi: J\ra k$ is an algebra epimorphism, and if $\varphi(x)=\a$ then $\varphi(\a y-xy)=\varphi(\a y-yx)=0$ for any $y\in I$.
\end{enumerate}
\end{proposition}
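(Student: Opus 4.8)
The plan is to specialize \prref{prop2.1} (equivalently \coref{coro2.2}) to the case $A=k$ and read off what each of the conditions (a), (b), (c) forces. The key observation driving everything is that the only ideals of the field $k$ are $0$ and $k$ itself. Since $Z\ss B\ss k$ are both ideals of $k$ by \prref{prop2.1}(a), there are exactly three admissible cases: $B=Z=0$, $B=Z=k$, and $B=k, Z=0$. (The fourth combinatorial possibility $B=0, Z=k$ is excluded by $Z\ss B$.) I would organize the proof as a trichotomy on these three cases, showing that each yields precisely one of (a), (b), (c) in the statement.

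First I would dispose of the case $B=Z=0$. Here $B/Z=0$, so the map $\varphi:J\ra B/Z$ is the zero map and $L=\Ker(\varphi)=J$; thus $K=(0,L)=(0,J)$. Condition \prref{prop2.1}(b') says $J$ is an $A$-subbimodule, which together with \prref{prop2.1}(b) ($J$ an $A$-subalgebra) exactly says $J=L$ is an ideal of $I$, giving case (a) of the proposition. Next, for $B=Z=k$: then $B/Z=0$ again, and since $\varphi$ is defined by $\varphi(x)=\a+Z$ whenever $(\a,-x)\in K$, the element $(1,0)\in K$ forces $\varphi(0)=1+Z$; but with $Z=k$ this is automatic, and \prref{prop2.1}(c) applied with $\varphi(x)=0+Z$ for all $x$ forces every product $\a y-xy$ into $\Ker(\varphi)=L$, which drives $L=I$ hence $J=I$, so $K=k\ltimes_d I$, giving case (b).

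The remaining and most substantive case is $B=k, Z=0$, which yields case (c). Here $B/Z=k$, so $\varphi:J\ra k$ is a linear surjection; \prref{prop2.1}(b) makes $J$ a subalgebra of $I$, and \prref{prop2.1}(c) makes $\varphi$ an algebra homomorphism (hence epimorphism onto $k$) with the extra condition that $\varphi(x)=\a$ implies $\a y-xy,\,\a y-yx\in\Ker(\varphi)$ for all $y\in I$. Translating $\Ker(\varphi)=L$ and rewriting membership in $\Ker(\varphi)$ as vanishing under $\varphi$ gives exactly the description $K=\{(\a,-x)\mid\a\in k, x\in J, \varphi(x)=\a\}$ with the stated compatibility, matching (c). Conversely, each of the three forms (a), (b), (c) produces data $(Z,B,J,L,\varphi)$ satisfying the hypotheses of \prref{prop2.1}, so each is genuinely an ideal.

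The only real care needed is bookkeeping: making sure the degenerate maps $\varphi$ in cases (a) and (b) really do satisfy all three clauses of \prref{prop2.1}, and checking that in case (b) the forcing $L=I$ is correct. I expect the main obstacle to be purely notational rather than conceptual, namely keeping straight which of $B,Z,J,L$ collapses in each branch; once the trichotomy on the ideals of $k$ is fixed, the content is an immediate transcription of \prref{prop2.1}.
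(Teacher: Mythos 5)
Your overall strategy---specializing \prref{prop2.1} to $A=k$, observing that $Z\ss B\ss k$ are ideals of the field $k$, and running the trichotomy $B=Z=0$, $B=Z=k$, $B=k,\,Z=0$---is exactly the paper's proof, and your treatment of the second and third cases matches it. However, there is one incorrect justification in the case $B=Z=0$: you assert that \prref{prop2.1}(b) together with (b') (that $J$ is a subalgebra of $I$ and an $A$-subbimodule) ``exactly says'' that $J=L$ is an ideal of $I$. With $A=k$ the subbimodule condition is vacuous (it only says $J$ is a $k$-subspace), and a subalgebra of $I$ need not satisfy $IJ\ss J$ and $JI\ss J$, so (b) and (b') alone do not yield the ideal property. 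The correct source is condition (c): since $B/Z=0$, every $x\in J$ satisfies $\varphi(x)=0+Z$, so for all $y\in I$ one gets $0\cdot y-xy=-xy\in\mathrm{Ker}(\varphi)=L=J$ and likewise $-yx\in J$; equivalently, one may cite \coref{coro2.2}(b), which records directly that $L$ is an $A$-Ideal of $I$. With that substitution the case $B=Z=0$ closes, and the rest of your argument (including the routine converse verifications you defer to bookkeeping) coincides with the paper's.
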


By \prref{prop4.1}, a nontrivial ideal of $k\ltimes_d I$ may not be contained in $(0, I)$. In the following, we give such an example.

\begin{example}
Let $U$ be an algebra without identity. Then $I=k\times U$ is also an algebra without identity.
Consider the Dorroh extension $k\ltimes_d I=k\ltimes_d (k\times U)$.
Let $J=I$. Define $\varphi: J\ra k$ by $\varphi(\a,u)=\a$. Then $\varphi$ is an algebra homomorphism.
If $\varphi(x)=\a$, then $x=(\a, u)\in J$ for some $u\in U$. In this case, for any $y=(\b,v)\in I=k\times U$,
$\a y-xy=(\a\b, \a v)-(\a\b, uv)=(0, \a v-uv)$ and  $\a y-yx=(\a\b, \a v)-(\b\a, vu)=(0, \a v-vu)$,
and hence $\varphi(\a y-xy)=\varphi(\a y-yx)=0$. Thus, by \prref{prop4.1}(c), the following set
is an ideal of $k\ltimes_d I=k\ltimes_d (k\times U)$:
$$K=\{(\a, -x)|\a\in k, x=(\a, u)\in J\}=\{(\a, -\a, u)|\a\in k, u\in U\}.$$
\end{example}

\subsection{The ideals of trivial algebra extensions}
Let $A$ be a unital algebra and $M$ a unital $A$-bimodule.
Then by \cite[Example 1.3(c)]{YC}, the trivial algebra extension $A\ltimes M$
is also an algebra Dorroh extension of $A$.
Let $K$ be a subspace of $A\ltimes M$. Then by the discussion in \seref{sec2},
there are subspaces $Z\ss B\ss A$ and $J\ss M$,
and a linear surjection $\varphi: J\ra B/Z$ such that
$K=\{(a,-x)\in A\ltimes M|a\in B, x\in J , \varphi(x)=\a+Z\}$.
With these notations, we have the following corollary.

\begin{corollary}\colabel{coro4.3}
$K$ is an ideal of $A\ltimes M$ if and only if the following hold.
\begin{enumerate}
\item[(a)]  $B$ and $Z$ are ideals of $A$, and $B^2\ss Z$.
\item[(b)] $J$ is an $A$-subbimodules of $M$.
\item[(c)] $\varphi$ is an $A$-bimodule epimorphism, and $BM\ss\mathrm{Ker}(\varphi)$ and $MB\ss \mathrm{Ker}(\varphi)$.
\end{enumerate}
\end{corollary}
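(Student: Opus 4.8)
The plan is to derive Corollary~\ref{co:coro4.3} as a direct specialization of \prref{prop2.1} (in its equivalent form (a), (b'), (c')) to the case where the algebra Dorroh pair is $(A, M)$ with $M$ a trivial $A$-bimodule, i.e.\ $MM = 0$. First I would recall from \cite[Example 1.3(c)]{YC} that the trivial extension $A\ltimes M$ arises from the Dorroh pair $(A, M)$ in which $M$ carries the given $A$-bimodule structure but the multiplication on the ``ideal part'' is the zero multiplication $xy = 0$ for all $x, y\in M$. With this observation the three conditions of \prref{prop2.1} collapse considerably, and the whole task reduces to simplifying each of them under the hypothesis $xy = 0$.

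Next I would go through the conditions one at a time. Condition (a) of \prref{prop2.1} says $Z$ and $B$ are ideals of $A$; here $A$ is a genuine algebra (the ambient $A$ in the trivial extension), so this is immediate. The extra requirement $B^2\ss Z$ in part (a) of the corollary is exactly what condition (c) of \prref{prop2.1} forces once the internal multiplication on $M$ vanishes: I would take $(a,-x),(a',-x')\in K$ and compute their product $(aa', ax'-xa'-xx') = (aa', ax'-xa')$ inside $K$, noting $xx'=0$, and read off that the image of $aa'$ in $A/Z$ must vanish, giving $B^2\ss Z$. Condition (b') of \prref{prop2.1}, that $J$ is an $A$-subbimodule of $M$, transfers verbatim to (b) of the corollary since ``subbimodule'' and ``sub-$A$-module'' coincide here. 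For condition (c) of the corollary I would observe that $\varphi\colon J\to B/Z$ being an $A$-bimodule homomorphism is precisely the bimodule-homomorphism clause of \prref{prop2.1}(c'), while the trailing clause ``if $\varphi(x)=a+Z$ then $ay-xy,\,ya-yx\in\Ker(\varphi)$ for any $y\in I$'' simplifies: since $xy = yx = 0$ in $M$, this becomes $ay,\,ya\in\Ker(\varphi)$, i.e.\ $BM\ss\Ker(\varphi)$ and $MB\ss\Ker(\varphi)$.

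The main obstacle — really the only point requiring care rather than mechanical substitution — is the interplay between condition (a)'s new requirement $B^2\ss Z$ and the surjectivity of $\varphi$. I would want to confirm that $B^2\ss Z$ is genuinely equivalent to the multiplicativity clause and not merely implied by it: on the one hand, if $\varphi$ is an $A$-bimodule map and $B^2\ss Z$ then the product of two elements of $K$ lands back in $K$ with zero $M$-component deviation; on the other, given that $K$ is an ideal, choosing $x, x'$ with $\varphi(x)=a+Z$, $\varphi(x')=a'+Z$ and multiplying shows $aa'+Z$ must be the image of $\varphi(0)=0$, hence $aa'\in Z$, so $B^2\ss Z$. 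I would present this equivalence explicitly since it is the one genuinely new condition in the corollary relative to the general proposition. Everything else is recording how the vanishing multiplication on $M$ trivializes the ``$-xy$'' and ``$-yx$'' terms.

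\begin{proof}
By \cite[Example 1.3(c)]{YC}, the trivial extension $A\ltimes M$ is the algebra Dorroh extension associated to the Dorroh pair $(A,M)$, where $M$ is the given $A$-bimodule equipped with the zero multiplication $xy=0$ for all $x,y\in M$. We apply \prref{prop2.1} in the form (a), (b$'$), (c$'$) with $I=M$.

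Condition (b$'$) of \prref{prop2.1} states exactly that $J$ is an $A$-subbimodule of $M$, which is part (b) of the corollary.

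Condition (c$'$) of \prref{prop2.1} asserts that $\varphi\colon J\to B/Z$ is an $A$-bimodule homomorphism, and that whenever $\varphi(x)=a+Z$ one has $ay-xy,\,ya-yx\in\Ker(\varphi)$ for all $y\in M$. Since $xy=yx=0$ in $M$, the latter reduces to $ay,\,ya\in\Ker(\varphi)$ for all $a\in B$ and $y\in M$, that is, $BM\ss\Ker(\varphi)$ and $MB\ss\Ker(\varphi)$. Together with the bimodule-homomorphism assertion this is part (c) of the corollary.

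It remains to identify the content of \prref{prop2.1}(a). That $Z$ and $B$ are ideals of $A$ is the first clause of part (a) of the corollary. For the inclusion $B^2\ss Z$, suppose $K$ is an ideal and let $a,a'\in B$, so there are $x,x'\in J$ with $\varphi(x)=a+Z$ and $\varphi(x')=a'+Z$, i.e.\ $(a,-x),(a',-x')\in K$. Then
$$
(a,-x)(a',-x')=(aa',\,ax'-xa'-xx')=(aa',\,ax'-xa')\in K,
$$
using $xx'=0$. Hence $aa'\in B$ and, writing $\varphi(xa'-ax')=aa'+Z$ with $xa'-ax'\in J$, the preceding clause $BM, MB\ss\Ker(\varphi)$ gives $ax',\,xa'\in\Ker(\varphi)$, so $\varphi(xa'-ax')=0+Z$ and therefore $aa'\in Z$. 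Thus $B^2\ss Z$. Conversely, if $Z$ and $B$ are ideals with $B^2\ss Z$ and conditions (b), (c) hold, then for $(a,-x),(a',-x')\in K$ the computation above yields $aa'\in Z$ and $\varphi(xa'-ax')=aa'+Z$, so the product again lies in $K$; combined with (b$'$) and the remaining clauses of (c$'$) this is precisely the content of \prref{prop2.1}, whence $K$ is an ideal of $A\ltimes M$.
\end{proof}
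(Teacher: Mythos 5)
Your proof is correct and follows essentially the same route as the paper: both specialize \prref{prop2.1} to the trivial extension, where $xy=0$ collapses conditions (b) and (c), the only real difference being that the paper derives $B^2\ss Z$ from $\varphi$ being an algebra epimorphism together with $J^2=0$, while you obtain it by multiplying two elements of $K$ and using $BM, MB\ss\Ker(\varphi)$. One small slip: with the multiplication $(a,x)(b,y)=(ab,ay+xb+xy)$ the product $(a,-x)(a',-x')$ equals $(aa',-ax'-xa'+xx')$, not $(aa',ax'-xa'-xx')$; the sign discrepancy does not affect your conclusion, since both $ax'$ and $xa'$ lie in $\Ker(\varphi)$.
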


\begin{proof}
By \cite[Example 1.3(c)]{YC}, $M$ is an algebra with the multiplication given by $xy=0$ for any $x, y\in M$.
Hence $M^2=0$, $ay-xy=ay$ and $ya-yx=ya$ for any $a\in A$ and $x, y\in M$.

If $K$ is an ideal of $A\ltimes M$, then (a), (b), (c) in \prref{prop2.1} are satisfied.
Hence $B$ and $Z$ are ideals of $A$, $J$ is an $A$-subAlgebra of $M$
and $\varphi: J\ra B/Z$ is an $A$-Algebra homomorphism. Moreover, if $\varphi(x)=a+Z$
then $ay, ya\in{\rm Ker}(\varphi)$ for any $y\in M$ since $ay-xy=ay$ and $ya-yx=ya$.
Hence $BM\ss\mathrm{Ker}(\varphi)$ and $MB\ss \mathrm{Ker}(\varphi)$ since $\varphi$ is surjective.
By $M^2=0$,  $J^2=0$. Since $\varphi$ is an algebra epimorphism, $(B/Z)^2=0$, i.e., $B^2\ss Z$.
Thus, (a), (b) and (c) hold. Conversely, if (a), (b) and (c) hold, then  $K$ is an ideal of $A\ltimes M$ by \prref{prop2.1}.
\end{proof}

%

\subsection{The subcoalgebras of $k\ltimes_d P$}
Let $P$ be a coalgebra without counit. By \cite[Example 2.11(a)]{YC},
there is a coalgebra Dorroh extension $k\ltimes_d P$, which is a coalgebra
with the counit $\ep$ given by $\ep(\a, p)=\a$, $\forall \a\in k, p\in P$.

Let $T$ be a subcoalgebra of $k\ltimes_d P$. Then
$$T=\{(\a, p)\in k\ltimes_d P|\a\in D, p\in Q, \eta(\a)=p+R\},$$
where $E\ss D\ss k$ and $R\ss Q\ss P$ are subspaces,
and $\eta: D\ra Q/R$ is a linear surjection with ${\rm Ker}(\eta)=E$ such that (a), (b) and (c) in \prref{prop3.3}
are satisfied. Clearly, $D=E=0$, or $D=E=k$, or $D=k$ and $E=0$.

If $D=E=0$, then $Q=R$ since $\eta$ is surjective. By \prref{prop3.3}(b),  $\rho^P_l(Q)\ss D\ot Q=0$.
However, $\rho^P_l(p)=1\ot p\in k\ot Q$ for any $p\in Q$. Hence $Q=0$, and so $T=0$.

If $D=E=k$, then $Q=R$ since $\eta$ is surjective and ${\rm Ker}(\eta)=E$.
By \prref{prop3.3}(b), $Q$ is a subcoalgebra of $P$.
Hence $T=\{(\a,p)|\a\in k, p\in Q\}$ for some subcoalgebra $Q$ of $P$.

If $D=k$ and $E=0$, then dim$(Q/R)=1$ since $\eta$ is surjective and ${\rm Ker}(\eta)=E$.
By \prref{prop3.3}(b) and (c), $Q$ is a subcoalgebra of $P$, $R$ is a coideal of $Q$ (or $P$),
and $\eta$ is a linear isomorphism. Moreover, $\eta(1)\ot p=\sum\pi(p_1)\ot p_2$
and $p\ot\eta(1)=\sum p_1\ot\pi(p_2)$ for any $p\in Q$, where
$\pi: Q\ra Q/R$ is the canonical projection. Choose an element $x\in Q$ such that
$\eta(1)=\pi(x)=x+R$. Then $\eta(1)\ot p=\sum\pi(p_1)\ot p_2$
and $p\ot\eta(1)=\sum p_1\ot\pi(p_2)$ are equivalent to $\D(p)-x\ot p\in R\ot Q$
and $\D(p)-p\ot x\in Q\ot R$, respectively, where $p\in Q$.
When $p=x$, the two equations are both equivalent to $x\ot x-\sum x_1\ot x_2\in R\ot R$.

\begin{proposition}\prlabel{prop4.4}
Let $T$ be a subspace of $k\ltimes_dP$. Then $T$ is a subcoalgebra of $k\ltimes_d P$ if and only if
one of the following hold.
\begin{enumerate}
\item[(a)]  $T=0$.
\item[(b)] $T=(k,Q)$, where $Q$ is a subcoalgebra of $P$.
\item[(c)] $T=\{(\a, \a x+p)| \a\in k, p\in R\}=k(1,x)+(0,R)$, where $R$ is a coideal of $P$ and $x\in P\setminus R$
such that $\D(x)-x\ot x\in R\ot R$, $\D(p)-x\ot p\in R\ot(kx+R)$ and $\D(p)-p\ot x\in(kx+R)\ot R$
for any $p\in R$.
\end{enumerate}
\end{proposition}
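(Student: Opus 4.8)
The plan is to specialize the general classification of subcoalgebras, namely \prref{prop3.3} and \coref{coro3.4}, to the case $C=k$, and to exploit that the only subspaces of the one-dimensional coalgebra $k$ are $0$ and $k$. First I would record that for $C=k$ the $k$-bicomodule structure on $P$ inside $k\ltimes_dP$ is the trivial one, $\rho^P_l(p)=1\ot p$ and $\rho^P_r(p)=p\ot 1$, so that every linear map between $k$-bicomodules is automatically a $k$-bicomodule homomorphism; this trivializes the homomorphism requirements in \prref{prop3.3}(c). With the data $E\ss D\ss k$ and $R\ss Q\ss P$ and $\eta: D\ra Q/R$ attached to $T$ as in \seref{sec3}, the inclusion $E\ss D$ leaves exactly three possibilities, $D=E=0$, $D=E=k$, and $D=k,\ E=0$, which I would show produce (a), (b) and (c) respectively.

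For $D=E=0$, the isomorphism $\ov{\eta}:D/E\ra Q/R$ of \coref{coro3.4} forces $Q=R$, while the condition $\rho^P_l(Q)\ss D\ot Q=0$ together with $\rho^P_l(p)=1\ot p$ forces $Q=0$; hence $T=0$. For $D=E=k$ the same isomorphism gives $Q=R$, and \prref{prop3.3}(b) says $Q$ is a subcoalgebra of $P$, so $T=(k,Q)$; here the compatibility equations collapse to $0=0$ because $\pi:Q\ra Q/R$ is the zero map. The substantive case is $D=k,\ E=0$, where $\ov{\eta}$ is an isomorphism from a one-dimensional space, so $\dim(Q/R)=1$, and choosing $x\in Q$ with $x+R=\eta(1)$ gives $Q=kx+R$ with $x\notin R$.

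The heart of the argument is to rewrite the two displayed conditions of \prref{prop3.3}(c) in this concrete setting. Using $\rho^P_l(p)=1\ot p$ and $\rho^P_r(p)=p\ot 1$ and $\eta(1)=\pi(x)$, the equations $\sum\eta(p_{(-1)})\ot p_{(0)}=\sum\pi(p_1)\ot p_2$ and $\sum p_{(0)}\ot\eta(p_{(1)})=\sum p_1\ot\pi(p_2)$ become $(\pi\ot 1)(x\ot p)=(\pi\ot 1)\D(p)$ and $(1\ot\pi)(p\ot x)=(1\ot\pi)\D(p)$, i.e. $\D(p)-x\ot p\in R\ot Q$ and $\D(p)-p\ot x\in Q\ot R$ for all $p\in Q$, since the kernel of $\pi\ot 1$ (resp. $1\ot\pi$) on $Q\ot Q$ is $R\ot Q$ (resp. $Q\ot R$). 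By linearity it suffices to impose these for $p\in R$ and for $p=x$; at $p=x$ the two memberships combine, via $(R\ot Q)\cap(Q\ot R)=R\ot R$ (choose a complement of $R$ in $Q$), into the single condition $\D(x)-x\ot x\in R\ot R$. I would also note that $R$ is then a coideal not merely of $Q$ but of $P$, because $Q$ is a subcoalgebra and $\D(R)\ss R\ot Q+Q\ot R\ss R\ot P+P\ot R$.

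For the converse I would feed the data back through \prref{prop3.3}: given $R$, $x$ and the three memberships, set $D=k$, $E=0$, $Q=kx+R$, $\eta(1)=x+R$, and check that $\D(x)\in x\ot x+R\ot R$ and $\D(p)\in x\ot p+R\ot Q$ for $p\in R$ force $\D(Q)\ss Q\ot Q$ and $\D(R)\ss R\ot Q+Q\ot R$, so that $Q$ is a subcoalgebra with coideal $R$; the comodule-homomorphism requirements of \prref{prop3.3}(b),(c) hold automatically over $C=k$. The main obstacle I anticipate is purely bookkeeping rather than conceptual: correctly converting the tensor-membership statements, in particular the intersection identity $(R\ot Q)\cap(Q\ot R)=R\ot R$ and the reduction from "for all $p\in Q$" to "for $p\in R$ and $p=x$", since the trivial $k$-coactions make every structural homomorphism condition vacuous.
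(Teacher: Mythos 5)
Your proposal is correct and follows essentially the same route as the paper: both specialize the classification of \prref{prop3.3} to $C=k$, split into the three cases $D=E=0$, $D=E=k$, and $D=k,\ E=0$, and in the last case choose $x$ with $\eta(1)=x+R$ to translate the compatibility equations into the tensor-membership conditions of part (c). Your explicit justification of the reduction to $p\in R$ and $p=x$ and of the identity $(R\ot Q)\cap(Q\ot R)=R\ot R$ only fills in details the paper leaves implicit.
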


\begin{proof}
Suppose that $R$ is a coideal of $P$ and $x\in P\setminus R$
such that $\D(x)-x\ot x\in R\ot R$, $\D(p)-x\ot p\in R\ot(kx+R)$ and $\D(p)-p\ot x\in(kx+R)\ot R$
for any $p\in R$. Let $Q=kx+R$. Then dim$(Q/R)=1$ and $\D(x), \D(p)\in Q\ot Q$ for any $p\in R$.
Hence $\D(Q)\ss Q\ot Q$, i.e., $Q$ is a subcoalgebra of $P$. Clearly, $R$ is a coideal of $Q$.
Moreover, $\D(p)-x\ot p\in R\ot Q$ and $\D(p)-p\ot x\in Q\ot R$ for any $p\in Q$.
Thus, the proposition follows from the above discussion.
\end{proof}

\subsection{The subcoalgebras of trivial coalgebra extensions}
Let $C$ be a counital coalgebra and $M$ a counital $C$-bicomodule.
Then by \cite[Example 2.11(c)]{YC}, the trivial coalgebra extension $C\ltimes M$
is a coalgebra Dorroh extension, where $M$ is a coalgebra with $\D_M=0$.
Let $T$ be a subspace of $C\ltimes M$. Then by the discussion in \seref{sec3},
there are  subspaces $E\ss D\ss C$ and $R\ss Q\ss M$, and a linear surjection  $\eta: D\ra Q/R $
with $\mathrm{Ker}(\eta)=E$ such that $T=\{(c,m)\in C\ltimes  M|c\in D, m\in Q, \eta(c)=m+R\}$.
With these notations, we have the following corollary.

\begin{corollary}
$T$ is a subcoalgebra of $C\ltimes M$ if and only if the following hold.
\begin{enumerate}
\item[(a)]  $E$ and $D$ are subcoalgebras of $C$ and  $\D(D)\ss E\ot D+D\ot E$.
\item[(b)] $Q$ and $R$ are $C$-subbicomodules of $M$, and $\rho_l(Q)\subseteq E\ot Q$
and $\rho_r(Q)\subseteq Q\ot E$, where $\rho_l$ and $\rho_r$ are the $C$-comodule structure maps of $M$.
\item[(c)] $\eta$ is a $C$-bicomodule homomorphism.
\end{enumerate}
\end{corollary}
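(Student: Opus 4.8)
The plan is to read off this corollary from \coref{coro3.4} by specializing to the degenerate coalgebra $M$, where $\D_M=0$, in exactly the way \coref{coro4.3} was obtained from \prref{prop2.1} using $M^2=0$. By \cite[Example 2.11(c)]{YC} the pair $(C,M)$ is a Dorroh pair of coalgebras in which $M$ is a coalgebra with vanishing comultiplication, so the general discussion of \seref{sec3} applies and $T$ has the displayed form, determined by the subspaces $E\ss D\ss C$, $R\ss Q\ss M$ and the surjection $\eta: D\ra Q/R$ with $\Ker(\eta)=E$. It then suffices to show that, under $\D_M=0$, conditions (a)--(c) of \coref{coro3.4} are equivalent to the three conditions stated here, which I would do by a term-by-term translation.

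First I would collapse the coalgebra-type hypotheses of \coref{coro3.4}(b). Since $\D_M=0$, both $\D_M(Q)\ss Q\ot Q$ and the coideal condition $\D_M(R)\ss R\ot Q+Q\ot R$ hold automatically, so ``$Q$ is a $C$-subCoalgebra'' and ``$R$ is a $C$-Coideal of $Q$'' reduce to ``$Q$ and $R$ are $C$-subbicomodules of $M$'', the first half of (b). Likewise the two compatibility identities in \coref{coro3.4}(c) have right-hand sides $\sum\pi(p_1)\ot p_2$ and $\sum p_1\ot\pi(p_2)$, and these vanish because $\D_M=0$; the identities thus become $\sum\overline{\eta}(\overline{p_{(-1)}})\ot p_{(0)}=0$ and $\sum p_{(0)}\ot\overline{\eta}(\overline{p_{(1)}})=0$ for all $p\in Q$. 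As $\overline{\eta}$ is injective, I would cancel it using $\overline{\eta}\ot 1$ and $1\ot\overline{\eta}$, and then, since tensoring over $k$ is exact so that $D\ot Q\ra(D/E)\ot Q$ has kernel $E\ot Q$, conclude the sharp containments $\rho_l(Q)\ss E\ot Q$ and $\rho_r(Q)\ss Q\ot E$ of (b). Because $E\ss D$, these in particular recover the weaker $\rho_l(Q)\ss D\ot Q$ and $\rho_r(Q)\ss Q\ot D$ in the reverse direction.

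For (a) and (c), note that \coref{coro3.4}(a) already records that $D$ and $E$ are subcoalgebras of $C$; since $E$ is a subcoalgebra it is a $C$-subbicomodule of $D$, so $D/E$ is a $C$-bicomodule and ``$\overline{\eta}: D/E\ra Q/R$ is a $C$-bicomodule isomorphism'' is equivalent to ``$\eta=\overline{\eta}\circ q$ is a $C$-bicomodule homomorphism'', where $q: D\ra D/E$ is the projection; this is condition (c). The only clause left is $\D(D)\ss E\ot D+D\ot E$ in (a), which I would derive from the left-comodule property of $\eta$: recalling that the coaction on $D$ is $d\mapsto\sum d_1\ot d_2$, the identity $\rho_l^{Q/R}(\eta(d))=\sum d_1\ot\eta(d_2)$ places its right-hand side in $E\ot(Q/R)$ (using $\rho_l(Q)\ss E\ot Q$), whence $(q\ot\eta)(\D(d))=0$; as $\Ker(q\ot\eta)=E\ot D+D\ot E$ over $k$, we get $\D(d)\in E\ot D+D\ot E$, and the right-comodule property gives the same inclusion by the symmetric computation. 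Each step reverses, so the translation is an equivalence and the corollary follows.

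The step I expect to be the crux is precisely this passage from the weak containments $\rho_l(Q)\ss D\ot Q$, $\rho_r(Q)\ss Q\ot D$ to the sharp $\rho_l(Q)\ss E\ot Q$, $\rho_r(Q)\ss Q\ot E$ together with the extraction of $\D(D)\ss E\ot D+D\ot E$. It rests on two observations that must be combined carefully: the right-hand sides of the \coref{coro3.4}(c) identities vanish exactly because $\D_M=0$, and exactness of tensoring over the field $k$ lets me compute kernels such as $\Ker(q\ot\eta)=E\ot D+D\ot E$. Keeping straight which tensor factor carries the comodule ``scalar'' and in which quotient each congruence is read is the part that demands attention rather than routine computation.
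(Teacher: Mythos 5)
Your proof is correct and follows essentially the same route as the paper's: specialize \prref{prop3.3}/\coref{coro3.4} to $\D_M=0$, so the right-hand sides of the compatibility identities vanish and the sharp containments $\rho_l(Q)\ss E\ot Q$, $\rho_r(Q)\ss Q\ot E$ drop out as kernels of $\eta\ot 1$ and $1\ot\eta$ over the field $k$. The only (harmless) divergence is at $\D(D)\ss E\ot D+D\ot E$, which the paper gets in one line from $\eta$ being a coalgebra map into $Q/R$ with $\D_{Q/R}=0$, so that $\D(D)\ss\Ker(\eta\ot\eta)=E\ot D+D\ot E$, whereas you route it through the comodule property of $\eta$ together with $\rho_l(Q)\ss E\ot Q$; both arguments are valid.
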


\begin{proof}
Note that $\D_M(m)=\sum m_1\ot m_2=0$ for any $m\in M$ as stated above.

If $T$ is a subcoalgebra of $C\ltimes M$, then (a), (b) and (c) in \prref{prop3.3} are satisfied.
Hence $E$ and $D$ are subcoalgebras of $C$, $Q$ and $R$ are $C$-subbicomodules of $M$, and $\eta: D\ra Q/R$ is a $C$-bicomodule homomorphism. Moreover, 
$\sum\eta(m_{(-1)})\ot m_{(0)} =0$ and $\sum m_{(0)}\ot\eta(m_{(1)})=0$ for any $m\in Q$.
Therefore, $\rho_l(m)\in{\rm Ker}(\eta\ot 1)=E\ot Q$
and  $\rho_r(m)\in{\rm Ker}(1\ot\eta)=Q\ot E$ for any $m\in Q$. That is,
$\rho_l(Q)\ss E\ot Q$ and $\rho_r(Q)\ss Q\ot E$.
Note that $Q/R$ is a coalgebra with the comultiplication $\D_{Q/R}=0$.
By \prref{prop3.3}, $\eta$ is a homomorphism of coalgebra (without counit).
Hence $(\eta\ot\eta)(\D(D))=\D_{Q/R}(\eta(D))=0$,
and so $\D(D)\ss{\rm Ker}(\eta\ot\eta)=E\ot D+D\ot E$. Thus, (a), (b) and (c) hold.
Conversely, if (a), (b) and (c) hold, then $T$ is a subcoalgebra of $C\ltimes M$ by \prref{prop3.3}
and the proof above.
\end{proof}

\centerline{ACKNOWLEDGMENTS}

This work is supported by NNSF of China (No. 11571298, 11971418) and
Graduate student scientific research innovation projects in Jiangsu Province, No. XKYCX18\_036

\end{document}